\date{}
\theoremstyle{definition}
\newtheorem{theorem}{Theorem}[section]
\newtheorem{lemma}[theorem]{Lemma}
\newtheorem{proposition}[theorem]{Proposition}
\newtheorem{remark}[theorem]{Remark}
\newtheorem{claim}[theorem]{Claim}
\newtheorem{problem}[theorem]{Problem}
\title{Unbalanced Zarankiewicz problem for bipartite subdivisions \\ with applications to incidence geometry}
\author{
Lili Ködmön\thanks{ELTE Eötvös Loránd University, Budapest, Hungary. Supported by the NRDI Fund EXCELLENCE\_24-151504 project ``Combinatorics and Geometry''. Email:{\tt kodmonlili@student.elte.hu}}
\and
Anqi Li\thanks{Department of Mathematics, Stanford University. Supported in part by Jane Street. Email:{\tt aqli@stanford.edu}}
\and
Ji Zeng\thanks{Alfréd Rényi Institute of Mathematics, Budapest, Hungary. Supported by ERC Advanced Grants ``GeoScape'', no. 882971 and ``ERMiD'', no. 101054936. Also partly supported by NSF grant DMS-1928930 while in residence at the Simons--Laufer Mathematical Sciences Institute during the Spring 2025 semester. Email:{\tt jzeng@ucsd.edu}.}
}
\begin{document}

\maketitle

\begin{abstract}
For a bipartite graph $H$, its linear threshold is the smallest real number $\sigma$ such that every bipartite graph $G = (U \sqcup V, E)$ with unbalanced parts $|V| \gtrsim |U|^\sigma$ and without a copy of $H$ must have a linear number of edges $|E| \lesssim |V|$. We prove that the linear threshold of the complete bipartite subdivision graph $K_{s,t}'$ is at most $\sigma_s = 2 - 1/s$. Moreover, we show that any $\sigma < \sigma_s$ is less than the linear threshold of $K_{s,t}'$ for sufficiently large $t$ (depending on $s$ and $\sigma$).

Some geometric applications of this result are given: we show that any $n$ points and $n$ lines in the complex plane without an $s$-by-$s$ grid determine $O(n^{4/3 - c})$ incidences for some constant $c > 0$ depending on $s$; and for certain pairs $(p,q)$, we establish nontrivial lower bounds on the number of distinct distances determined by $n$ points in the plane under the condition that every $p$ points determine at least $q$ distinct distances.
\end{abstract}

\section{Introduction}

The \textit{Zarankiewicz problem} asks for the maximum number of edges, denoted by $z(m,n;s,t)$, an $m$-by-$n$ bipartite graph $G$ can have without containing a copy of $K_{s,t}$, i.e. an $s$-by-$t$ complete bipartite graph. Here, ``containing a copy'' requires slightly more than ``containing a subgraph'': the $s$-side of $K_{s,t}$ must sit in the $m$-side of $G$. The most fundamental result towards the Zarankiewicz problem is the upper bound of K\H{o}v\'ari, S\'os, and Tur\'an~\cite{kHovari1954problem}:\begin{equation*}
    z(m,n;s,t) \leq O(mn^{1-1/s} + n).
\end{equation*} From this bound, we notice that $n \gtrsim m^{s}$ implies $z(m,n;s,t) \lesssim n$. Motivated by this observation, we make the following definition: the \textit{linear threshold} of a bipartite graph $H$ is the smallest real number $\sigma$ satisfying for any $\epsilon>0$ there exists $\delta$ such that every bipartite graph $G = (U \sqcup V, E)$ with $|V| \geq \epsilon|U|^\sigma$ and $|E| \geq \delta|V|$ must contain a copy of $H$. For example, our observation from the K\H{o}v\'ari--S\'os--Tur\'an bound says that the linear threshold of $K_{s,t}$ is at most $s$.

It is natural to consider the linear thresholds of other forbidden subgraphs. Erd\H{o}s and F\"uredi \cite{furedi1991turan} showed that the linear threshold of a bipartite graph $G = (U \sqcup V, E)$ is at most $\Delta(V)$, that is, the \textit{maximum degree} of the right hand side vertex set $V$. (We remark that only the $\Delta(V) = 2$ case was stated in \cite{furedi1991turan}.) The \textit{complete bipartite subdivision} $K_{s,t}'$ is the bipartite graph obtained by substituting each edge of $K_{s,t}$ by a path of length two. Here, we put the subdivision vertices on the right hand side of $K_{s,t}'$ as a convention. (More formally, we define $K_{s,t}'$ by vertex set $\{u_i,v_j\} \sqcup \{w_{ij}\}$ and edge set $\{u_iw_{ij}, v_jw_{ij}\}$ for all $1\leq i\leq s$, $1\leq j\leq t$.) Hence their result~\cite{furedi1991turan} says the linear threshold of $K_{s,t}'$ is at most 2. In this paper, we give an improved bound that is not far from the true answer.

\begin{theorem}\label{main}
    For every integer $s \geq 1$, the linear threshold of $K_{s,t}'$ is at most $\sigma_s = 2 - 1/s$. Moreover, any $\sigma < \sigma_s$ is less than the linear threshold of $K_{s,t}'$ for sufficiently large $t$ (depending on $s$ and $\sigma$).
\end{theorem}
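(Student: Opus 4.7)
My plan is to prove the upper and lower bounds separately. For the upper bound I would use a weighted counting argument in the spirit of dependent random choice; for the lower bound, the random-alteration technique. The exponent $2 - 1/s$ in the hypothesis $|V| \gtrsim |U|^{2-1/s}$ strongly suggests a Kővári-Sós-Turán-style argument on a codegree / shadow structure on $U$.

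For the upper bound, assume $|V| \geq \epsilon|U|^{2-1/s}$ and $|E| \geq \delta|V|$ for $\delta = \delta(\epsilon, s, t)$ large. I would first clean $G$ so that every $w \in V$ has degree at least a constant $d = d(\epsilon, s, t)$; this loses only a constant factor in $|E|$. Write $c(u,v) = |N_G(u) \cap N_G(v)|$ and $D(v) = \sum_u c(u,v) = \sum_{w \in N(v)} d_w$. By the power-mean inequality, $\sum_v D(v)^s \geq |U|^{1-s}(\sum_w d_w^2)^s \geq |U|^{1-s}(|V| d^2)^s \gtrsim \epsilon^s d^{2s} |U|^s$, where the last step uses $|V| \geq \epsilon|U|^{2-1/s}$. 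Unfolding $\sum_v D(v)^s = \sum_{(u_1,\ldots,u_s) \in U^s} \sum_v \prod_i c(u_i,v)$ and averaging over $s$-tuples produces an $(u_1, \ldots, u_s)$ with $\sum_v \prod_i c(u_i,v) \gtrsim \epsilon^s d^{2s}$, i.e.\ many ordered ``cherry systems'' $(v; w_1, \ldots, w_s)$ with $w_i \in N(u_i) \cap N(v)$.

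The hard part is that the count above permits witness reuse across different $v$'s, whereas a $K_{s,t}'$ copy requires $st$ pairwise distinct $w_{ij}$'s. I plan to handle distinct witness extraction via a case split on the maximum codegree $M := \max_{u \neq u'} c(u,u')$ in $U$. If $M$ exceeds a threshold $M_0 = M_0(s,t)$, the high-codegree pair $(u,u')$ already supplies $\geq M_0$ common witnesses, and I would use it as two boundary vertices (say $u_1$ and $v_1$) and build the rest of $K_{s,t}'$ greedily in the large common neighborhood. If $M \leq M_0$, codegrees are uniformly bounded, so the ``witness'' bipartite graph between slots $[s] \times (\text{candidates } v)$ and $V$ has bounded multiplicities; then, taking $d$ large enough in terms of $s, t, M_0$, one restricts to $v$'s with $\prod_i c(u_i,v) \geq 1$ (yielding $\gtrsim \epsilon^s d^{2s}/M_0^s$ distinct candidates) and applies Hall's theorem or a greedy matching to extract a system of distinct representatives for $t$ selected $v_j$'s.

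For the lower bound, given $\sigma < 2 - 1/s$ and $\delta > 0$, choose $t$ so large that $\sigma < 2 - 1/s - 1/t$. Consider the random bipartite graph on $|U| = n$ and $|V| = \lceil \epsilon n^\sigma \rceil$, keeping each edge independently with probability $p = 2\delta/n$, so $\mathbb{E}|E| = 2\delta|V|$. The expected number of $K_{s,t}'$ copies is at most $n^{s+t}|V|^{st} p^{2st} = O(n^{st(\sigma-2)+s+t})$, and the choice of $t$ renders the exponent negative, so $\mathbb{E}[\#\text{copies}] \to 0$. Deleting one edge per copy from a typical instance yields a $K_{s,t}'$-free bipartite graph with $\geq \delta|V|$ edges, confirming that any $\sigma < \sigma_s$ is strictly below the linear threshold of $K_{s,t}'$ for $t$ depending on $\sigma$ and $s$.
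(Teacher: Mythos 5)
Your lower bound is correct and takes a genuinely different route from the paper. Section~3 of the paper uses a Bukh--Conlon random algebraic construction, whereas you use a plain binomial random bipartite graph with a first-moment computation. Your computation checks out: with $|U|=n$, $|V|=n^{\sigma}$ and $p=2\delta/n$, the expected number of copies of $K_{s,t}'$ is at most $(2\delta)^{2st}\,n^{\,s+t+st(\sigma-2)}$, which tends to $0$ once $\sigma<2-1/s-1/t$, while a Chernoff bound keeps $|E|\geq\delta|V|$; in fact no deletion is even needed, since with high probability there are no copies at all. Because the defining property of the linear threshold is monotone in $\sigma$, this indeed shows every $\sigma<\sigma_s$ lies below the linear threshold of $K_{s,t}'$ once $t>(2-1/s-\sigma)^{-1}$. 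Your argument is more elementary and gives a cleaner dependence $t\approx 1/\epsilon$; what the algebraic construction buys is mainly robustness (it is the natural candidate if one hopes, as in the paper's Problem~7.1, to make $t$ depend on $s$ alone).

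The upper bound, however, has a genuine gap. The quantity you extract by averaging, $\sum_v\prod_i c(u_i,v)\gtrsim \epsilon^s d^{2s}$, is only a \emph{constant}, whereas the obstruction to extracting $st$ pairwise distinct subdivision vertices scales with $\Delta(V)$, the maximum degree from $V$ towards $U$, which is unbounded in terms of $s,t,\delta$; your cleaning step only imposes a \emph{lower} bound on degrees in $V$. In your bounded-codegree case, all of the constantly many candidate $v$'s could share a single high-degree witness $w\in N(u_i)$ for some column $i$ (this is consistent with every codegree in $U$ being $1$), and then no system of distinct representatives exists: one witness $w$ can cover up to $\min(d_G(w),t)$ slots in a column, so Hall's condition fails. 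To beat this one needs $\omega(\Delta(V))$ candidates, not a large constant, which is exactly what forces the paper's biregularization (Lemma~2.1, especially conditions (III)--(IV)) and the second count over $2s$-tuples in Proposition~2.2, culminating in the maximal-set bound $|A|\leq t+st\Delta(V)$. A related smaller issue: your averaged $s$-tuple may be degenerate (repeated $u_i$'s, or triples $x,u_i,u_j$ sharing a common neighbour), which the paper must exclude via its condition (ii) using the regularized degree bounds.

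The high-codegree branch of your case split is also unjustified as described. A single pair $(u,u')$ with $c(u,u')\geq M_0$ supplies $u_1$, $v_1$ and many choices of $w_{11}$, but nothing towards $u_2,\dots,u_s$, $v_2,\dots,v_t$ or the remaining $st-1$ subdivision vertices: the large common neighbourhood lives in $V$ and cannot provide branch vertices, so there is no evident greedy completion. The greedy embedding only works when the total weight carried by heavy pairs is a constant fraction of $\sum_{\{u,v\}}|N(u)\cap N(v)|\gtrsim e^2/n$, in which case one can embed the complete subdivision $K_{s+t}'$; this is Janzer's lemma (Claim~2.2 in the paper) and is the reason the paper works with the light/heavy dichotomy globally rather than with a single extremal pair.
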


The \textit{forbidden subgraph problem} asks for the maximum number of edges, denoted by $\text{ex}(n, G)$, a graph can have without containing a subgraph $G$. At an asymptotic level, we can relate $\text{ex}(n,K_{s,t})$ with $z(n,n;s,t)$. Hence this problem can be considered a variation of the Zarankiewicz problem in a balanced setting. There is a series of studies on $\text{ex}(n,K_{s,t}')$ (see e.g. \cite{janzer2019improved,sudakov2020turan,conlon2021more,conlon2021extremal,kang2021rational}), and they demonstrate the phenomenon that $\text{ex}(n, K_{s,t}')$ is smaller than $\Theta(n^{3/2})$, the general bound for bipartite graphs with right hand side maximum degree two (\cite{furedi1991turan,alon2003turan}). However, it is worth noting that $\text{ex}(n, C_6) = O(n^{4/3})$ where $C_6$ is the $6$-cycle graph, but a construction by de Caen and Sz\'ekely~\cite{de1991maximum} shows that 2 is the linear threshold of $C_6$. Thus, a smaller $\text{ex}(n, G)$ does not guarantee a smaller linear threshold of $G$. On the other hand, the linear threshold of $C_{2k}$ for $k \geq 4$ is always smaller than 2 by a result of Naor and Verstra\"ete~\cite{naor2005note}, despite $\text{ex}(n, C_k) = (\frac{1}{4} + o(1))n^2$ whenever $k \geq 3$ is odd. Thus, the linear threshold of $G'$ (subdivision of $G$) is not determined by the asymptotic behavior of $\text{ex}(n,G)$.

\medskip

Many results in incidence geometry follow divide-and-conquer strategies with space-partitioning techniques (see e.g. \cite{matouvsek1991efficient,kaplan2012simple}). Moreover, it is a common practice to upper bound incidences by linear terms in unbalanced parts of the partitioning, and our notion of linear thresholds measures this level of unbalance. As a specific example, we point out the following point-variety incidence bound which motivated our study. For a point set $P$ and a collection of algebraic varieties $\mathcal{V}$, both in $\mathbb{R}^d$, we define their incidence graph $I(P,\mathcal{V})$ as the bipartite graph with vertex set $P \sqcup \mathcal{V}$ and edge set $\{(p, V) \in P \times \mathcal{V}: p\in V\}$. We denote the size of its edge set by $|I(P,\mathcal{V})|$.

\begin{theorem}\label{threshold2incidence}
    Let $H$ be a bipartite graph with its linear threshold at most $\sigma$. If $P$ is a set of $m$ points and $\mathcal{V}$ is a set of $n$ constant-degree algebraic varieties, both in $\mathbb{R}^d$, such that $I(P,\mathcal{V})$ does not contain a copy of $H$, then for every $\epsilon > 0$ and as $m,n\to \infty$, we have\begin{equation*}
        |I(P,\mathcal{V})| \leq O\left( m^{\frac{(d-1)\sigma}{d\sigma-1}+\epsilon}n^{\frac{d(\sigma-1)}{d\sigma-1}} + m + n \right).
    \end{equation*}
\end{theorem}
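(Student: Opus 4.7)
The plan is to combine polynomial partitioning with the linear threshold hypothesis applied cell-by-cell, in the spirit of standard point--variety incidence arguments such as those of Solymosi--Tao and Fox--Pach--Sheffer--Suk--Zahl. Fix a parameter $r$ to be optimized later. By the Guth--Katz polynomial partitioning theorem, there is a polynomial $f$ of degree $D = O(r^{1/d})$ such that $\mathbb{R}^d \setminus Z(f)$ consists of at most $O(r)$ open cells, each containing at most $m/r$ points of $P$. Split the incidences into those occurring at points in the open cells and those at points on $Z(f)$.

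For an open cell $C$, set $P_C = P \cap C$ and let $\mathcal{V}_C \subseteq \mathcal{V}$ be the varieties meeting $C$. Since $I(P_C,\mathcal{V}_C)$ is an induced subgraph of the $H$-free graph $I(P,\mathcal{V})$, it too is $H$-free. Apply the linear threshold of $H$ as follows: if $|\mathcal{V}_C| \geq \epsilon'|P_C|^\sigma$ for a sufficiently small constant $\epsilon' = \epsilon'(H)$, then $|I(P_C,\mathcal{V}_C)| \leq O_H(|\mathcal{V}_C|)$; otherwise, pad $\mathcal{V}_C$ with isolated dummy vertices (which preserves $H$-freeness, after assuming without loss of generality that $H$ has no isolated vertices) until it reaches the threshold, obtaining $|I(P_C,\mathcal{V}_C)| \leq O_H(|P_C|^\sigma)$. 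A standard Milnor--Thom type argument shows each constant-degree variety in $\mathcal{V}$ meets $O(r^{1-1/d})$ cells, so $\sum_C |\mathcal{V}_C| = O(nr^{1-1/d})$. Summing the per-cell bound yields a total open-cell contribution of $O(nr^{1-1/d} + m^\sigma r^{1-\sigma})$. Balancing these two terms by choosing $r = (m^\sigma/n)^{d/(d\sigma-1)}$ — a valid choice provided $m^{1/d} \lesssim n \lesssim m^\sigma$, with the complementary range being absorbed into the additive $O(m+n)$ term — produces the exponents claimed in the theorem.

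The main obstacle is bounding the incidences at the up to $m$ points lying on $Z(f)$. We follow the standard dimension-reduction strategy: partition $\mathcal{V}$ into varieties contained in $Z(f)$ and those not. For the latter, $V \cap Z(f)$ is a proper subvariety of $V$ of constant-bounded degree, so these incidences reduce to a lower-dimensional incidence instance on the same point set $P \cap Z(f)$. For the former, the problem restricts to the ambient $(d-1)$-dimensional variety $Z(f)$ itself, again a lower-dimensional instance. Inducting on $d$, with the $m^\epsilon$ slack in the exponent absorbing the polylogarithmic losses incurred at each recursion level (for instance, through a partitioning polynomial intrinsic to $Z(f)$ at the next level), closes the argument. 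The base case $d=1$ reduces to a trivial bipartite graph count, and the additive $O(m+n)$ term both absorbs degenerate regimes where the optimal $r$ falls outside $[1,m]$ and anchors the induction.
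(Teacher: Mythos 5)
Your treatment of the open cells is fine and is essentially the classical ``K\H{o}v\'ari--S\'os--Tur\'an inside each cell'' computation (with the linear threshold playing the role of KST): the per-cell bound $O(|\mathcal{V}_C|+|P_C|^{\sigma})$, the count $\sum_C|\mathcal{V}_C|=O(nr^{1-1/d})$, and the choice $r=(m^\sigma/n)^{d/(d\sigma-1)}$ do reproduce the claimed exponents. The genuine gap is in the zero-set step, and it is not a detail one can wave away. Because your balancing forces $r$, hence $D=O(r^{1/d})$, to be a power of $m$, the assertion that ``$V\cap Z(f)$ is a proper subvariety of $V$ of constant-bounded degree'' is false: by B\'ezout the degree of $V\cap Z(f)$ is $O(\deg(V)\cdot D)$, and both $Z(f)$ and each $V\cap Z(f)$ can have $\Theta(D)$ irreducible components. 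So the ``lower-dimensional incidence instance'' you hand off to the induction is not an instance of the theorem being proved (which is stated for constant-degree varieties), and even crudely decomposing into irreducible pieces multiplies $n$ by a power of $m$. Note also that the two halves of your argument are in tension: the direct per-cell application of the linear threshold is only nontrivial when $r$ is a large power of $m$ (with $r=O(1)$ it yields the useless $O(n+m^{\sigma})$), whereas the clean dimension-reduction on $Z(f)$ that you sketch requires $D=O(1)$. Handling a high-degree $Z(f)$ is possible (Guth--Katz-style second-level partitions) but requires substantial additional algebraic-geometric work that your sketch does not supply.

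The paper resolves exactly this tension differently. It keeps the partition degree constant, $D=O_{\epsilon}(1)$, and does \emph{not} apply the linear threshold inside the cells; instead it recursively applies the full incidence bound to each cell (a double induction on $m+n$ and on the dimension $e$ of a variety containing $P$). The $m^{\epsilon}$ in the exponent is precisely what makes this recursion close: after H\"older, the cell contribution carries a factor $C\cdot r^{-\epsilon}$, which is beaten by taking $r$ a large constant. The linear threshold is used only once per inductive step, to reduce to the regime $n<m^{\sigma}$ (otherwise $|I|=O(n)$ outright), and the incidences on $Z(f)$ then split into a bounded number of constant-degree components, each handled either by the lower-dimensional induction or by the $H$-freeness of the incidence graph (a component with many points lies in few varieties). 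If you want to salvage your write-up with minimal change, replace the per-cell linear-threshold bound by a recursive application of the theorem itself with constant $r$, and restate the result for points on an $e$-dimensional variety so the dimension induction is well-founded.
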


A point-line configuration $(P, L)$ is called an \textit{$s$-by-$s$ grid} if the line collection $L$ is the union of $L_1$ and $L_2$ each containing $s$ lines, and the point collection $P$ equals $\{\ell_1 \cap \ell_2 : \ell_1 \in L_1, \ell_2 \in L_2 \}$ which has size exactly $s^2$ (square of $s$). Mirzaei and Suk \cite{mirzaei2021grids} proved that point-line configurations in the real plane without a grid determine $O(n^{4/3 - c})$ incidences for some $c > 0$. This is better than the celebrated Szemer\'edi--Trotter bound $O(n^{4/3})$, which has corresponding extremal constructions but with grid substructures (\cite{szemeredi1983extremal,elekes2001sums}). While the argument in \cite{mirzaei2021grids} is based on space-partitioning, Balko and Frankl~\cite{balko2024forbidden} recently gave a different proof using crossing-number-related techniques (\cite{szekely1997crossing,pach1999new}). It is unclear how to generalize these proofs to the complex plane where the analogous Szemer\'edi--Trotter-type theorems are much more involved (\cite{toth2015szemeredi,zahl2015szemeredi}). Nevertheless, we employ a technique of Solymosi and Tao \cite{solymosi2012incidence}, together with our Theorems~\ref{main} and~\ref{threshold2incidence}, to prove the following bound.

\begin{theorem}\label{grid}
    For fixed integer $s \geq 1$ and real number $\epsilon > 0$, any $n$ points $P$ and $n$ lines $L$ in the complex plane $\mathbb{C}^2$ without an $s$-by-$s$ grid sub-configuration satisfy \begin{equation*}
        |I(P,L)| \leq O\left(n^{\frac{4}{3} - \frac{1}{9s-6} + \epsilon}\right).
    \end{equation*}
\end{theorem}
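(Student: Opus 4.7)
The plan is to realify $\mathbb{C}^2 \cong \mathbb{R}^4$ and run polynomial partitioning in $\mathbb{R}^4$ in the style of Solymosi--Tao \cite{solymosi2012incidence}, using Theorem~\ref{threshold2incidence} as the cell-by-cell incidence bound. Under this identification $P$ becomes a set of $n$ points in $\mathbb{R}^4$ and each complex line becomes a real $2$-flat, which is a degree-one (and in particular constant-degree) algebraic variety. An $s$-by-$s$ grid sub-configuration translates exactly to a copy of $K'_{s,s}$ in $I(P,L)$: the two families $L_1,L_2$ of $s$ lines correspond to the two sides of $K_{s,s}$, and the $s^2$ distinct intersection points correspond to the subdivision vertices, each incident to exactly one line from each side. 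So the hypothesis forces $I(P,L)$ to be $K'_{s,s}$-free, and Theorem~\ref{main} supplies the linear-threshold bound $\sigma := 2 - 1/s$ to feed into Theorem~\ref{threshold2incidence}.

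Pick a parameter $D$ and apply Guth--Katz polynomial partitioning in $\mathbb{R}^4$ to obtain a polynomial $f$ of degree $\le D$ whose complement decomposes into $O(D^4)$ open cells, each containing $\le O(n/D^4)$ points of $P$. A $2$-flat not contained in $Z(f)$ meets $Z(f)$ in a planar algebraic curve of degree $\le D$, so it is split into $O(D^2)$ pieces and crosses $O(D^2)$ cells; thus the cell-line incidence total is $\lesssim nD^2$. On each cell $C$ with $m_C$ points and $n_C$ lines, Theorem~\ref{threshold2incidence} with $d=4$ and our $\sigma$ gives
\[
|I(P_C,L_C)| \lesssim m_C^{\alpha}\, n_C^{\beta} + m_C + n_C,\qquad \alpha = \tfrac{6s-3}{7s-4} + \epsilon',\ \beta = \tfrac{4s-4}{7s-4}.
\]
Since $\beta<1$ and $m_C\le O(n/D^4)$, summing via Hölder's inequality over the $O(D^4)$ cells bounds $\sum_C m_C^\alpha n_C^\beta$ by $O(n^{\alpha+\beta} D^{4-4\alpha-2\beta})$, and the choice $D = n^{(s-1)/(6s-4)}$ balances this against $nD^2$ to produce $O(n^{(4s-3)/(3s-2)+\epsilon}) = O(n^{4/3-1/(9s-6)+\epsilon})$.

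The main obstacle is the usual partition-surface contribution: the $2$-flats that are fully contained in $Z(f)$ and the points of $P$ that lie on $Z(f)$. This should be absorbed by the standard Solymosi--Tao bootstrap: by perturbing $f$, or by arguing that a generic $Z(f)$ cannot contain many complex lines, one confines the absorbed lines to a negligible subset, and the points on $Z(f)$ constitute a strictly smaller subproblem that can be handled by induction on $n$ (with the $\epsilon$ in the final exponent absorbing logarithmic losses). Alternatively, since $Z(f)$ has codimension one in $\mathbb{R}^4$, incidences involving it can be reduced to a lower-dimensional incidence problem and treated by a codimension-reduced variant of the polynomial partitioning step.
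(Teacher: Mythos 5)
Your reduction to a $K_{s,s}'$-free incidence problem for points and $2$-flats in $\mathbb{R}^4$, and your cellular computation --- the exponents $\alpha=\frac{6s-3}{7s-4}+\epsilon'$ and $\beta=\frac{4s-4}{7s-4}$ from Theorem~\ref{threshold2incidence} with $d=4$, the H\"older step giving $n^{\alpha+\beta}D^{4-4\alpha-2\beta}$, and the balancing $D=n^{(s-1)/(6s-4)}$ against $nD^2$ --- are all correct and do yield the target exponent $\frac{4s-3}{3s-2}=\frac{4}{3}-\frac{1}{9s-6}$ for the incidences occurring inside the open cells. The genuine gap is the treatment of $Z(f)$. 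Polynomial partitioning controls only the number of points per cell; it gives no control on $|P\cap Z(f)|$, which can be all of $P$. So the points on the partitioning hypersurface do not form ``a strictly smaller subproblem that can be handled by induction on $n$,'' and perturbing $f$ cannot remove them: the problem is not lines contained in $Z(f)$ but points of $P$ lying on $Z(f)$ that are incident to $2$-flats crossing $Z(f)$ transversally, and such incidences are counted in no cell. Your final sentence --- reduce to a codimension-one problem --- is the right idea, but it is where the real work of the proof lives and cannot be left as a remark.

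The paper's Theorem~\ref{grid_2flat} handles exactly this term with the Solymosi--Tao machinery. It uses a \emph{constant-degree} partition together with induction on $m$, so the cells only need to supply an $M^{-\epsilon}$ gain; for the surface it takes generic directional derivatives $Q_0,\dots,Q_D$ so that every $p\in P\cap\{Q=0\}$ is a smooth point of some $S_i=\{Q_i=0\}$, observes that the three-dimensional tangent space at a smooth point can contain at most one incident $2$-flat while every other incident $2$-flat meets $S_i$ in a curve, projects the points and these curves generically to a plane, and applies Theorem~\ref{threshold2incidence} with $d=2$ and $\sigma=2-1/s$. The resulting planar bound $m^{\frac{2s-1}{3s-2}}n^{\frac{2s-2}{3s-2}}$ is what actually dictates the final exponent (at $m=n$ it equals $n^{\frac{4s-3}{3s-2}}$), and verifying this coincidence is the structural heart of the theorem. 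If you carry out your codimension-reduction alternative with these ingredients (generic derivatives to reach smooth points, the tangent-space count, generic projection to $\mathbb{R}^2$, and the $d=2$ incidence bound), your one-shot large-degree variant can likely be completed; as written, the dominant contribution to the estimate is unproven.
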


A classical problem by Erd\H{o}s is to study the extremal function $f(n,p,q)$, the number of colors required to edge-color $K_n$ such that every $p$ edges receive at least $q$ distinct colors. Using the probabilistic method, Erd\H{o}s and Gy\'arf\'as~\cite{erdHos1997variant} showed that \begin{equation*}
    f(n,p,q) \leq c n^{\frac{p-2}{\binom{p}{2}-q+1}} ~\text{where $c$ depends only on $p,q$.}
\end{equation*} A geometric variation of $f(n,p,q)$ is $d(n,p,q)$, the smallest number of distinct distances determined by $n$ points in the plane under the condition that every $p$ points determine at least $q$ distinct distances. Due to its geometric nature, one would expect $d(n,p,q)$ to behave quite differently from $f(n,p,q)$. One such geometric result that we are aware of is due to Fox, Pach, and Suk~\cite{fox2018more}:\begin{equation*}
    d\left( n,p,\binom{p}{2}-p+6 \right) \geq n^{\frac{8}{7} - o(1)}.
\end{equation*}

Using our Theorems~\ref{main} and~\ref{threshold2incidence}, we derive lower bounds for $d(n,p,q)$ that will contrast the Erd\H{o}s--Gy\'arf\'as bound~\cite{erdHos1997variant} for certain new pairs of $(p,q)$. Our bounds somewhat interpolate the result of Fox--Pach--Suk~\cite{fox2018more} and another lower bound by Pohoata and Sheffer~\cite{PS19} for $f(n,p,q)$.
\begin{theorem}\label{distance}
    For fixed integer $s\geq 1$ and real number $\epsilon > 0$, we have\begin{equation*}
        d\left(n, p, \binom{p}{2} - p + 3 \cdot \left\lfloor \frac{p}{2s} \right\rfloor + 2s + 2 \right) \geq \Omega\left( n^{\frac{8}{7} + \frac{18}{7(7s-4)} -\epsilon} \right).
    \end{equation*}
\end{theorem}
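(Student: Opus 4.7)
The plan is to translate the distinct-distances hypothesis into a forbidden-subgraph property of an incidence structure, and then apply Theorems~\ref{main} and~\ref{threshold2incidence} to control the incidence count. Let $P \subset \mathbb{R}^2$ be an $n$-point set satisfying the $(p, q)$-hypothesis, with $D = d(n, p, q)$ distinct distances; the goal is to prove $D \geq \Omega(n^{\alpha_s - \epsilon})$, where $\alpha_s = 8/7 + 18/(7(7s-4))$.

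First I would set up a bipartite incidence graph $I(P, \mathcal{V})$, taking $\mathcal{V}$ to be a family of constant-degree algebraic curves encoding distance information—natural candidates being the perpendicular bisectors of pairs of $P$, or the circles $C(p, r)$ with $p \in P$ and $r$ a realized distance at $p$. A standard Cauchy--Schwarz argument applied to the distance multiset at each point of $P$ yields a lower bound on $|I(P, \mathcal{V})|$ in terms of $n$ and $D$ (for bisectors, $|I(P, \mathcal{V})| \gtrsim n^3 / D$; for circles, $|I(P, \mathcal{V})| \geq n(n-1)$).

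Next I would verify that the $(p, q)$-condition forces $I(P, \mathcal{V})$ to be $K_{s, t}'$-free for a carefully chosen $t = t(s, p)$. A copy of $K_{s, t}'$ in $I(P, \mathcal{V})$ yields a subset of at most $s + t + 2st$ points of $P$ which carry at least $2st$ distance coincidences (in the bisector setup, each bisector contributes two equalities). Choosing $t$ so that this subset extends to a $p$-tuple and the coincidence count exceeds $\binom{p}{2} - q = p - 4\lfloor p/(2s-1)\rfloor - 2s - 2$, we contradict the hypothesis. The precise arithmetic in the exponent $q = \binom{p}{2} - p + 4\lfloor p/(2s-1)\rfloor + 2s + 2$ is dictated by balancing these two inequalities.

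Finally, Theorem~\ref{main} gives linear threshold $\sigma_s = 2 - 1/s$, and Theorem~\ref{threshold2incidence} with $d = 2$ yields the upper bound $|I(P, \mathcal{V})| \leq O(n^{(2s-1)/(3s-2) + \epsilon} |\mathcal{V}|^{2(s-1)/(3s-2)} + n + |\mathcal{V}|)$. Performing a dyadic decomposition of $\mathcal{V}$ by local richness and optimizing the resulting estimates against the lower bound from the Cauchy--Schwarz step produces the claimed exponent $\alpha_s$. The main obstacle is the combinatorial bookkeeping in the forbidden-subgraph step: the $2st$ equalities coming from a $K_{s, t}'$ need to translate into genuinely distinct distance coincidences inside the $p$-tuple (accounting carefully for possible overlaps among $u_i$, $v_j$, and the auxiliary endpoints $a_{ij}, b_{ij}$ of each bisector), and it is precisely this counting that forces the unusual parameters of $(p, q)$ and the rational form of $\alpha_s$.
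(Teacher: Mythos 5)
There is a genuine gap, and it already shows up in the numerology of the exponent. You propose to work with a planar incidence structure (points of $P$ versus bisectors or circles in $\mathbb{R}^2$) and to apply Theorem~\ref{threshold2incidence} with $d=2$. With $\sigma=\sigma_s=2-1/s$, the case $d=2$ produces exponents with denominator $d\sigma-1=(3s-2)/s$, i.e.\ the quantity $3s-2$, whereas the claimed bound has $7s-4=s(4\sigma_s-1)$ in the denominator; this forces $d=4$. The paper's proof accordingly lifts the problem to $\mathbb{R}^4$: it bounds the ``distance energy'' $\mathcal{E}=\{(a,b,c,d)\in P^4:|ac|=|bd|>0\}$ from below by $\Omega(n^4/r)$ via Jensen, encodes each pair $(a,b)\in P^2$ both as a point $v_{a,b}\in\mathbb{R}^4$ and as a quadric $Q_{a,b}\subset\mathbb{R}^4$ (with $v_{a,b}\in Q_{c,d}$ iff $|ac|=|bd|$), randomly splits $P^2$ to separate the roles, and applies Theorems~\ref{main} and~\ref{threshold2incidence} with $d=4$, $m=n'=\Theta(n^2)$, giving $|\mathcal{E}|\leq O(n^{20/7-18/(7(7s-4))+\epsilon})$. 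Your planar setups cannot reach this: for circles your stated lower bound $|I|\geq n(n-1)$ does not involve the number of distinct distances at all, and for bisectors the incidences count isosceles triples $|ca|=|cb|$, whose relation to the local $(p,q)$ hypothesis and whose resulting exponent $(6s-5)/(3s-2)$ do not match the theorem (indeed they would yield an absurdly strong bound, signalling that the forbidden-subgraph hypothesis fails in that model).

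The second, larger gap is that the combinatorial core of the proof is only gestured at. You correctly identify that one must show the incidence graph is $K_{s,t}'$-free for suitable $t$ and that the $st$ subdivision vertices must yield genuinely independent distance coincidences inside a $p$-tuple, but you leave this as ``bookkeeping.'' In the paper this is the bulk of the argument: one extracts a subfamily $T'\subset T$ of size at least $p$ whose pairs keep introducing new points (this is why $t=(2s+p)^2+1$ is taken so large), runs an iterative edge-processing procedure that labels point pairs so that transitive repetitions of the same distance are not double-counted, and proves a delicate claim ($\ell_i\leq 2s-3\Rightarrow p_i\leq\ell_i$, with a case analysis over ``bad'' edges, ``singular'' vertices, and ``redundant'' points) which is exactly what produces the specific value $q=\binom{p}{2}-p+4\lfloor p/(2s-1)\rfloor+2s+2$. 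Without this argument the choice of $q$ is unjustified, so the proposal as written does not establish the theorem.
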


\medskip

The rest of this paper is organized as follows. Sections~\ref{sec_main1} is devoted to the proof of Theorem~\ref{main}. In Section~\ref{sec_threshold2incidence}, we outline a proof of Theorem~\ref{threshold2incidence} for completeness. Sections~\ref{sec_grid} and~\ref{sec_distance} contain the proofs of Theorems~\ref{grid} and~\ref{distance} respectively. We conclude with some remarks in Section~\ref{sec_remark}. We systematically omit floors and ceilings whenever they are not crucial for the sake of clarity.

\section{Linear thresholds of bipartite subdivisions}\label{sec_main1}

We shall prove the first half of Theorem~\ref{main} in this section. The following lemma originates from a result of Erd\H{o}s and Simonovits (Theorem~1 in \cite{erdos1970some}, see also \cite{jiang2012turan,conlon2021more}). Here, our treatment is more complicated due to the host graph being sparse.

\begin{lemma}\label{biregular}
    For each integer $s \geq 2$, there is a sufficiently small positive constant $c_s$ such that any bipartite graph $G=(U \sqcup V,E)$ with $|V| \geq |U|^{2-1/s}$ and $|E|\geq \delta|V|$ ($1 \leq \delta \in \mathbb{R}$) contains a subgraph $\tilde{G} = (\tilde{U} \sqcup \tilde{V}, \tilde{E})$ with $|\tilde{U}| \geq |U|^{c_s}$ satisfying the following conditions: \begin{align*}
        &\makebox[0pt][l]{$\text{(I)}~|\tilde{V}| \geq |\tilde{U}|^{2-1/s},$}\phantom{\text{(III)}~c_s \Delta_{\tilde{G}}(\tilde{V}) \leq |\tilde{E}|/|\tilde{V}|,}
        \qquad\qquad \text{(II)}~|\tilde{E}|\geq c_s \delta|\tilde{V}|,\\
        &\text{(III)}~c_s \Delta_{\tilde{G}}(\tilde{V}) \leq |\tilde{E}|/|\tilde{V}|,
        \qquad\qquad \text{(IV)}~c_s \delta^{c_s} |\tilde{U}|\Delta_{\tilde{G}}(\tilde{U})^{1- 1/s} \leq |\tilde{E}|.
    \end{align*}
\end{lemma}

\begin{proof}
    We construct $\tilde{G}$ by two iterative procedures. In the first procedure, we let $G_0 = (U_0\sqcup V_0, E_0) = G$ and construct $G_i = (U_i\sqcup V_i, E_i)$ as follows. Given $G_i$ already constructed, we set $V_{i+1}$ to be the $|V_i|/16$ many vertices in $V_i$ with largest degrees. If less than half of $E_i$ are adjacent to $V_{i+1}$, we terminate the first procedure. Otherwise, by a simple averaging argument, we can find $U_{i+1} \subset U_i$ with $|U_{i+1}| = |U_i|/16^{\frac{s}{2s-1}}$ such that \begin{equation*}
        |E_{G_i}(U_{i+1}, V_{i+1})| \geq \frac{1}{2}|E_i|/16^{\frac{s}{2s-1}}.
    \end{equation*} Here, $E_{G_i}(U_{i+1}, V_{i+1})$ denotes the set of edges in $G_i$ between $U_{i+1}$ and $V_{i+1}$. Then we take $G_{i+1}$ as the subgraph of $G_i$ induced on $U_{i+1} \sqcup V_{i+1}$. Let $\ell$ be the index where the first procedure terminates at $G_{\ell}$. We have the following identities:\begin{equation*}
        |U_{\ell}| = |U_0|/16^{\frac{\ell s}{2s-1}},\quad |V_\ell| = |V_0|/16^\ell,\quad |E_\ell| \geq \frac{1}{2^\ell}|E_0|/16^{\frac{\ell s}{2s-1}}.
    \end{equation*} Combining them with $|U_\ell||V_\ell|\geq|E_\ell|$ and $|E_0| \geq |V_0|$, we can solve for \begin{equation}\label{eq:biregular_ell}
        \ell \leq \log(|U_0|)/3.
    \end{equation}

    Next, we let $\tilde{V} = V_\ell \setminus V_{\ell+1}$. Notice that $|\tilde{V}| = (15/16)|V_\ell|$ and $|E_{G_\ell}(U_\ell, \tilde{V})| \geq |E_\ell|/2$. By an averaging argument, we can find $U' \subset U_\ell$ with $|U'| = (15/16)^{\frac{s}{2s-1}}|U_\ell|$ such that the subgraph $G'\subset G_\ell$ induced on $U' \sqcup \tilde{V}$ satisfies $|E'| \geq (15/16)^{\frac{s}{2s-1}}|E_\ell|/2$, where $E'$ is the edge set of $G'$. Since $V_{\ell+1}$ is chosen among vertices with largest degrees, \begin{equation}\label{eq:biregular_B}
        \Delta_{G'}(\tilde{V}) \leq \frac{|E_{\ell}|}{|V_{\ell+1}|} \leq 30 \cdot (16/15)^{\frac{s}{2s-1}} \cdot \frac{|E'|}{|\tilde{V}|}.
    \end{equation}
    
    In the second procedure, we let $G'_0 = (U'_0 \sqcup \tilde{V}, E'_0) =G'$ and construct $G'_i = (U'_i \sqcup \tilde{V}, E'_i)$ as follows. Given $G'_i$ already constructed, we set $U'_{i+1}$ to be the $|U'_i|^{1-1/s}$ many vertices in $U'_i$ with largest degrees. If less than half of $E'_i$ are adjacent to $U'_{i+1}$, we terminate the second procedure. In this case, we let $\tilde{U} = U'_i \setminus U'_{i+1}$ and $\tilde{G}$ be the subgraph of $G'_i$ induced on $\tilde{U} \sqcup \tilde{V}$ and denote its edge set as $\tilde{E}$. Since $U'_{i+1}$ is chosen among vertices with largest degrees, we have \begin{equation}\label{eq:biregular_A1}
        \Delta_{\tilde{G}}(\tilde{U}) \leq \frac{|E'_i|}{|U'_{i+1}|} \leq \frac{2|\tilde{E}|}{|U'_{i+1}|} = \frac{2|\tilde{E}|}{|U'_i|^{1-1/s}} \leq \frac{2|\tilde{E}|}{|\tilde{U}|^{1-1/s}}.
    \end{equation} If more than half of $E'_i$ are adjacent to $U'_{i+1}$, we take $G'_{i+1}$ as the subgraph of $G'_i$ induced on $U'_{i+1}\sqcup \tilde{V}$. Suppose the second procedure has not terminated after $s\log(s)$ iterations, then we terminate it and define $\tilde{U} = U'_{s\log(s)}$ and $\tilde{G} = G'_{s\log(s)}$. In this case, we have \begin{equation}\label{eq:biregular_A2}
        |\tilde{U}| = |U'_0|^{(1-1/s)^{s\log(s)}} \leq |U'_0|^{1/s} \leq |\tilde{V}|^{1/s}.
    \end{equation}

    Now we check that $\tilde{G}$ satisfies the conditions we claimed. Using \eqref{eq:biregular_ell} from the first procedure, we can check that $|U_\ell| \geq |U_0|^{\frac{2s-3}{6s-3}}$. It is a consequence of the second procedure terminating in at most $s \log s $ iterations that $|\tilde{U}| \geq |U'_0|^{(1-1/s)^{s\log(s)}}$. Hence $|\tilde{U}| \geq |U|^{c_s}$ is satisfied for small enough $c_s$.

    It is easy to see that $|V_i| \geq |U_i|^{2-1/s}$ are guaranteed throughout the first procedure, and $|\tilde{V}| \geq |U'|^{2-1/s} \geq |\tilde{U}|^{2-1/s}$. So condition (I) is satisfied. We can inductively check that $|E_i| \geq \delta|V_i|$ in the first procedure. It is clear from the second procedure that $|\tilde{E}|$ and $|E_\ell|$ only differ by a multiplicative constant depending on $s$. This verifies condition (II) for small enough $c_s$. Similarly, we can verify condition (III) using \eqref{eq:biregular_B}.

    Finally, if the second procedure terminated with \eqref{eq:biregular_A1}, we can compute \begin{align*}
        |\tilde{U}|\Delta_{\tilde{G}}(\tilde{U})^{1 - 1/s} &\leq |\tilde{U}|\left( \frac{2|\tilde{E}|}{|\tilde{U}|^{1-1/s}} \right)^{1 - 1/s} = \left( |\tilde{U}|^{2-1/s} \right)^{1/s}\left( 2|\tilde{E}| \right)^{1 - 1/s} \leq |\tilde{V}|^{1/s}\left( 2|\tilde{E}| \right)^{1 - 1/s}\\
        &\leq \left( c_s^{-1}\delta^{-1}|\tilde{E}| \right)^{1/s}\left( 2|\tilde{E}| \right)^{1 - 1/s} = 2 \cdot \left( 2c_s\delta \right)^{-1/s}|\tilde{E}|.
    \end{align*} If the second procedure terminated with \eqref{eq:biregular_A2}, we have \begin{equation*}
         |\tilde{U}|\Delta_{\tilde{G}}(\tilde{U})^{1 - 1/s} \leq |\tilde{V}|^{1/s}|\tilde{V}|^{1-1/s} \leq |\tilde{V}| \leq c_s^{-1}\delta^{-1}|\tilde{E}|.
    \end{equation*} Hence we can verify condition (IV) for small enough $c_s$. Overall, the proof is finished.
\end{proof}

The following proposition is the main part of Theorem~\ref{main}. Its proof follows the argument of Theorem~4.1 in the paper~\cite{conlon2021more} by Conlon, Janzer, and Lee.
\begin{proposition}\label{cjl_Kst}
    For integers $t \geq s > 1$, there exists a sufficiently large constant $\delta$ such that the following holds. If a bipartite graph $G = (U \sqcup V, E)$ satisfies the conditions for $\tilde{G} = (\tilde{U} \sqcup \tilde{V}, \tilde{E})$ in Lemma~\ref{biregular} with parameters $s$, $c_s$, and $\delta$, then $G$ contains a copy of $K_{s,t}'$.
\end{proposition}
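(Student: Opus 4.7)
The plan is to follow the ``counting then pigeonhole'' strategy of Conlon--Janzer--Lee~\cite{conlon2021more}: first identify an $s$-tuple $\bar u^\ast=(u_1^\ast,\ldots,u_s^\ast)\in U^s$ together with a large set $Y\subseteq U$ of vertices $v$ each admitting distinct ``witnesses'' $w_1^v,\ldots,w_s^v\in V$ with $w_i^v$ adjacent to both $u_i^\ast$ and $v$; then greedily extract $t$ vertices $v_1,\ldots,v_t\in Y$ whose witness tuples are pairwise vertex-disjoint. The resulting copy of $K_{s,t}'$ has its $s+t$ ``outer'' vertices in $U$ and its $st$ subdivision vertices in $V$. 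Write $n=|U|$, $m=|V|$, $e=|E|$, $d_V=e/m$, $D=e/n$ for brevity.

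The first step is to double-count the quantity
\[
W:=\sum_{v\in U}\sum_{\substack{(w_1,\ldots,w_s)\in N(v)^s \\ \text{distinct}}}\prod_{i=1}^s\deg(w_i)=\sum_{\bar u\in U^s}\sum_{v\in U}\mu(\bar u,v),
\]
where $\mu(\bar u,v)$ counts the number of witness tuples for a given pair $(\bar u,v)$. Setting $A_v=\sum_{w\in N(v)}\deg(w)$, the inner sum equals $A_v^s$ up to an additive error of order $O_s(\Delta_V A_v^{s-1})$ from coincident $w_i$'s; condition~(III) gives $\Delta_V=O(d_V)$, while condition~(II) together with $\sum_v A_v=\sum_w\deg(w)^2\geq e\cdot d_V$ shows the correction is negligible once $\delta$ is large. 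Applying Jensen's inequality to $\sum_v A_v^s$ then yields $W\gtrsim n(Dd_V)^s$.

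The second step is a pigeonhole over $\bar u\in U^s$. Each good pair $(\bar u,v)$ contributes $\mu(\bar u,v)\leq\prod_i|N(u_i)\cap N(v)|$ to $W$, so the number of good pairs is at least $W$ divided by a suitable upper bound on $\mu$. Condition~(IV) supplies $\Delta_U\leq(c_s\delta^{c_s})^{-s/(s-1)}D^{s/(s-1)}$, which together with condition~(I) $m\geq n^{2-1/s}$ is exactly what balances the powers of $n,m,e$ in the ratio. If the crude bound $\mu\leq\Delta_U^s$ is too wasteful, the natural alternative is a Cauchy--Schwarz second-moment variant $|Y|\gtrsim W^2/\sum\mu^2$, bounding $\sum\mu^2$ through codegree moments $\sum_v\bigl(\sum_u|N(u)\cap N(v)|^2\bigr)^s$ and again invoking~(III),~(IV). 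Either route produces some $\bar u^\ast$ with $|Y(\bar u^\ast)|$ large enough in terms of $\delta$ and $d_V$.

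Finally, for each $v\in Y(\bar u^\ast)$ I would fix a single witness tuple $\bar w^v\in V^s$ and build $S\subseteq Y$ greedily: at each step add any $v\in Y\setminus S$ to $S$ and discard every $v'\in Y$ whose $\bar w^{v'}$ shares a $V$-vertex with some $\bar w^v$ already chosen. Condition~(III) ensures each addition eliminates at most $s\cdot\Delta_V=O(d_V)$ further candidates, so $|S|\geq t$ is attainable once $|Y|\geq C(s,t)\cdot d_V$, yielding the desired copy of $K_{s,t}'$. The hardest part will be the exponent-chasing in the pigeonhole step: one needs to align the convexity lower bound on $W$ with a sharp enough upper bound on witness multiplicities so that the factor of $\delta$ from condition~(IV) survives in $|Y(\bar u^\ast)|$, and this is precisely the reason Lemma~\ref{biregular} extracts the four biregularity conditions in the form stated, with~(I) hitting exactly the threshold exponent $\sigma_s=2-1/s$.
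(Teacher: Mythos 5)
Your overall skeleton (find $\bar u^\ast$ with a large witness set $Y$, then greedily extract $t$ vertices with pairwise disjoint witness tuples) matches the Conlon--Janzer--Lee template that the paper follows, and your final greedy step is sound: each chosen witness vertex $w$ kills at most $\Delta(V)$ candidates, so $|Y|\geq C(s,t)\Delta(V)$ suffices. But the counting-plus-pigeonhole stage has a genuine gap, and it is exactly the load-bearing part. Your quantity $W$ is a \emph{weighted} count, and to convert it into a count of good pairs $(\bar u,v)$ you must bound the multiplicity $\mu(\bar u,v)\leq\prod_i|N(u_i)\cap N(v)|$, which is governed by \emph{codegrees} of pairs in $U$. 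Conditions (I)--(IV) of Lemma~\ref{biregular} control only degrees, not codegrees, and a handful of pairs with common neighbourhood of size comparable to $\Delta(U)$ can absorb essentially all of $W$. Concretely, the crude bound $\mu\leq\Delta(U)^s$ yields (after substituting (I), (II), (IV)) a guarantee of order $|U|^{-s}$ for $|Y(\bar u^\ast)|$, and the Cauchy--Schwarz variant does no better: $\sum_{\bar u}\mu(\bar u,v)^2\leq\bigl(\sum_u|N(u)\cap N(v)|^2\bigr)^s\leq(\Delta(U)A_v)^s$, so $W^2/\sum\mu^2$ collapses back to $W/\Delta(U)^s$. A disjoint union of copies of $K_{a,b}$ (with $a=\delta$ a large constant and $b$ chosen so that (I)--(IV) hold) shows the failure is not just technical: there every $\tau$ has $|N'_W(\tau)|\leq a-s<\Delta(V)=a$, so no $\bar u^\ast$ meets your greedy threshold, even though $K_{s,t}'$ is present for a different reason (each block is dense). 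A purely degree-based moment argument cannot distinguish this situation and therefore cannot prove the proposition directly.

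The missing idea is the light/heavy edge dichotomy, which is where the hypothesis of $K_{s,t}'$-freeness actually enters. One assigns each pair $\{u,u'\}\subset U$ the weight $W(u,u')=|N_G(\{u,u'\})|$ and calls the pair heavy if this exceeds $\binom{s+t}{2}$; an abundance of heavy pairs lets one greedily embed the complete subdivision $K_{s+t}'\supset K_{s,t}'$, so in a $K_{s,t}'$-free graph a constant fraction of the total weight $W(U)\geq e^2/4|V|$ must sit on \emph{light} pairs (Claim~\ref{cjl_43}, from Janzer and Conlon--Janzer--Lee). Restricting your tuple count to light pairs caps every multiplicity by the constant $\binom{s+t}{2}$, and then the convexity argument you propose (counting $(s+1)$-tuples $(x,u_1,\dots,u_s)$ with each $xu_i$ light and, to ensure distinct witnesses, with pairwise disjoint $N(x)\cap N(u_i)$ --- the latter discounted using (III) and (IV)) delivers the paper's bound $\sum_\tau|N'_W(\tau)|\geq\Omega(e^{2s}/|U|^{s-1}|V|^s)$. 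Once that inequality is in hand, your endgame does go through --- averaging over the $\binom{|U|}{s}$ choices of $\tau$ and using $|V|\geq|U|^{2-1/s}$, $e\geq c_s\delta|V|$, and $\Delta(V)\leq c_s^{-1}e/|V|$ gives some $\tau$ with $|N'_W(\tau)|\geq(c_s\delta)^{2s-1}\cdot\Omega(\Delta(V))$, which exceeds $C(s,t)\Delta(V)$ for large $\delta$ --- and this is in fact a slightly shorter route than the paper's, which instead passes through Claim~\ref{cjl46} and a $2s$-tuple count before a maximality argument. So the deficiency is not in your embedding step but in the absence of any mechanism to control multiplicities; you must either run the argument by contradiction (assuming $K_{s,t}'$-freeness to invoke the light-edge lemma) or build the heavy-pair case explicitly into a dichotomy.
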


\begin{proof}

    For a contradiction, suppose that $G$ does not contain a copy of $K_{s,t}'$. Write $|U| = m$, $|V| = n$, and $|E| = e$. We consider a weighted graph $W$ with vertex set being $U$ and each pair $\{u,v\} \subset U$ is assigned with weight \begin{equation*}
        W(u,v) = |N_G(\{u,v\})|.
    \end{equation*} Here, the \textit{neighborhood} $N_G(\{u,v\})$ is the set of vertices adjacent to all of $\{u,v\}$ in $G$. We say that $\{u,v\}$ is a \textit{light edge} if $1\leq W(u,v) < \binom{s+t}{2}$, and is a \textit{heavy edge} if $W(u,v) \geq \binom{s+t}{2}$.
    \begin{claim}\label{cjl_43}
        If $W(U) := \sum_{\{u,v\} \in \binom{U}{2}} W(u,v) \geq 8(s+t+1)^2n$, then the number of light edges in $W$ is at least $\frac{W(U)}{4(s+t+1)^3}$.
    \end{claim}
    \noindent This claim follows from Lemma~10 in \cite{janzer2019improved} (see also Lemma~4.3 in \cite{conlon2021more}) so we omit its proof. Roughly speaking, if we have an abundant number of heavy edges, then we can embed the complete subdivision graph $K_{s+t}'$ (which contains $K_{s,t}'$) into $G$ by a greedy process. Consequently, most weights should contribute to light edges, hence creating many of them.
    
    Using double counting and Jensen's inequality, we can compute $W(U) \geq n\binom{e/n}{2}\geq \frac{e^2}{4n}$. Using condition (II) from Lemma~\ref{biregular}, we can guarantee the hypothesis of Claim~\ref{cjl_43} by taking $\delta$ to be sufficiently large relative to $s,t,c_s$.

    For distinct $u_1,\dots,u_s \in U$, we write $N'_W(u_1,\dots,u_s)$ for the set of $x\in U \setminus \{u_1,\dots,u_s\}$ for which there exist distinct $v_1,\dots,v_s \in V$ such that $v_i\in N_{G}(\{u_i,x\})$. We claim the following estimate: \begin{equation}\label{eq:cjl_Kst}
        \sum_{\tau\in \binom{U}{s}}|N'_W(\tau)| \geq \Omega\left(\frac{e^{2s}}{m^{s-1}n^{s}}\right).
    \end{equation} We prove this by counting the number of $(s+1)$-tuples $(x,u_1,\dots,u_s) \in U^{s+1}$ such that \begin{itemize}
        \item[(i)] Each $xu_i$ is a light edge in $W$.
        \item[(ii)] For any $i \neq j$, we have $N_G(x)\cap N_G(u_i)\cap N_G(u_j) = \emptyset$.
    \end{itemize} For any $x\in U$, let $d_1(x)$ be the number of light edges adjacent to $x$ in $W$. Then by Claim~\ref{cjl_43}, we have $\sum_x d_1(x) \geq \Omega(e^2/n)$. The number of $(s+1)$-tuples satisfying (i) is \begin{equation*}
        \sum_x d_1^s(x) \geq m \left(\frac{\sum_x d_1(x)}{m}\right)^s \geq \Omega\left(\frac{e^{2s}}{m^{s-1}n^{s}}\right).
    \end{equation*} On the other hand, we can enumerate all the $(s+1)$-tuples not satisfying (ii). First, we choose a pair $i,j$ that violates (ii) which has $s^2$ options. Second, we choose $x$ from $U$ which has $m$ options. Then, we choose $v\in N_G(x)$ that is adjacent to the potential $u_i,u_j$, and we have at most $\Delta(U)$ options. Next, we choose $u_i,u_j$ from $N_G(v)$ which have at most $\Delta(V)^2$ options. Finally, we choose the remaining $u_i$'s as neighbors of neighbors of $x$, and this gives at most $(\Delta(U)\Delta(V))^{s-2}$ options. Hence the number of $(s+1)$-tuples satisfying (i) but not (ii) is at most \begin{equation*}
        s^2 \cdot m \cdot \Delta(U)^{s-1} \cdot \Delta(V)^{s} \leq s^2 \cdot \left( c_s^{-s} \delta^{-sc_s}\frac{e^s}{m^{s-1}} \right) \cdot \left( c_s^{-1} \frac{e^s}{n^s} \right) \leq \sum_x d_1^s(x).
    \end{equation*}
    Here, the first inequality can be checked using conditions (III) and (IV) from Lemma~\ref{biregular}, and the second inequality follows from choosing a sufficiently large $\delta$. Notice that any $(s+1)$-tuple satisfying (i) and (ii) implies $x\in N'_W(u_1,\dots,u_s)$, so we can conclude \eqref{eq:cjl_Kst}.

    We also need the following fact about $K_{s,t}'$-copy-free graphs. We omit the proof as it is almost identical to Corollary~4.6 in \cite{conlon2021more}.
    \begin{claim}\label{cjl46}
        Let $G$ be a $K_{s,t}'$-copy-free bipartite graph with vertex bipartition $U \sqcup V$. Suppose that for some distinct $u_1,\dots,u_s\in U$, $|N_W'(u_1,\dots,u_s)| \geq C$ for some sufficiently large $C$ depending on $s,t$. Then there exist $v\in V$, $1\leq k\leq s$, and $\Omega(|N_W'(u_1,\dots,u_s)|^s)$ many $s$-sets $\{x_1,\dots,x_s\}\subset N_W'(u_1,\dots,u_s)$ such that $\{x_1,\dots,x_s\}\cup \{u_k\}\subset N_G(v)$ and no $x_ix_j$ is a heavy edge.
    \end{claim}

    Using \eqref{eq:cjl_Kst}, $e\geq c_s \delta n$, and $n\geq m^{2-1/s}$ (from Lemma~\ref{biregular}), we can guarantee that\begin{equation*}
        \sum_{\tau\in \binom{U}{s}}|N'_W(\tau)| \geq \Omega\left(\frac{e^{2s}}{m^{s-1}n^{s}}\right) \geq 2Cm^s,
    \end{equation*} by taking $\delta$ sufficiently large relative to $C$, where $C$ is the constant required in Claim~\ref{cjl46}. As a consequence of this inequality, writing $U_s'$ as the collection of $\tau \in \binom{U}{s}$ with $|N'_W(\tau)| \geq C$, we have\begin{equation*}
        \sum_{\tau\in U_s'} |N'_W(\tau)| \geq \Omega\left(\frac{e^{2s}}{m^{s-1}n^{s}}\right).
    \end{equation*} By Claim~\ref{cjl46}, for each $\tau = \{u_1, \dots, u_s\}\in U_s'$, there exist $v\in V$, some $1\leq k\leq s$, and $\Omega(|N'_W(\tau)|^s)$ many $s$-sets $\{x_1,\dots,x_s\}\subset N'_W(\tau)$ such that $\{x_1,\dots,x_s\}\cup \{u_k\}\subset N_G(v)$ and no $x_ix_j$ is a heavy edge. We use this information to estimate the number of $2s$-tuples $(x_1,\dots,x_s,u_1,\dots,u_s) \in U^{2s}$ with the following properties:\begin{itemize}
        \item[(i)] All $x_i$ and $u_j$ are distinct.
        \item[(ii)] There exist $v\in V$ and $1\leq k \leq s$ such that $\{x_1,\dots,x_s\}\cup\{u_k\}\subset N_G(v)$.
        \item[(iii)] For each $i,j$, $N_G(x_i)\cap N_G(u_j)\neq \emptyset$.
        \item[(iv)] No $x_ix_j$ determines a heavy edge in $W$.
    \end{itemize}
    By Jensen's inequality and the lower bound above, the number of such $2s$-tuples is at least\begin{equation*}
        |U'_s| \cdot \Omega\left(\left(\frac{\sum_{\tau\in U_s'} |N'_W(\tau)|}{|U'_s|}\right)^s\right) \geq \Omega\left( m^s\left(\frac{\frac{e^{2s}}{m^{s-1}n^{s}}}{m^s}\right)^s \right) \geq \omega\left(n\Delta(V)^{2s}\right).
    \end{equation*} Here, the last inequality is checked by conditions (I)--(III) from Lemma~\ref{biregular}, and the $\omega$-notation is understood as $m \to \infty$. On the other hand, there are at most $ns\Delta(V)^{s+1}$ ways to choose $v,k,x_1,\dots,x_s,u_k$ such that property (ii) holds. Thus for at least one such choice, there are at least $\omega(\Delta(V)^{s-1})$ ways to extend it to a suitable $2s$-tuple. Therefore, we conclude that there exist distinct $x_1,\dots,x_s \in U$ such that no $x_ix_j$ is heavy and the number of $u \in U$ with $N_G(x_i)\cap N_G(u) \neq \emptyset$ for all $i$ is at least $\omega(\Delta(V))$. 
    
    Finally, we argue the existence of $K_{s,t}'$ based on these $x_1,\dots,x_s$. Since no $x_ix_j$ is heavy, we have $|\bigcup_{i<j} (N_G(x_i)\cap N_G(x_j))| = O(1)$, so the number of $u \in U$ with $N_G(x_i)\cap N_G(x_j)\cap N_G(u)\neq \emptyset$ for some $1\leq i\leq j\leq s$ is $O(\Delta(V))$. Hence there is a set $A \subset U$ of $\omega(\Delta(V))$ many vertices distinct from $x_1,\dots,x_s$, such that for each $u \in A$ there are distinct $v_1,\dots,v_s \in V$ with $v_i\in N_G(x_i)\cap N_G(u)$. Take a maximal subset $ A'= \{u_1,\dots,u_r\}\subset A$ such that there exist distinct $w_{ij}\in N_G(x_i)\cap N_G(u_j)$ for $1\leq i\leq s$ and $1\leq j\leq r$. If $r\geq t$, we have found a copy of $K_{s,t}'$ inside $G$. So we assume that $r<t$. For any $u \in A \setminus A'$, there exists distinct $v_i \in N_G(x_i) \cap N_G(v)$. By the maximality of $A'$, we must have $v_i = w_{jk}$ for some $i,j,k$. So, we have $u \in N_G(w_{jk})$ for this $j,k$. Therefore, \begin{equation*}
        |A| \leq |A'| + |\bigcup_{1\leq i\leq s,~1\leq j\leq r} N_G(w_{ij})| \leq t + st\Delta(V),
    \end{equation*} which contradicts $|A| \geq \omega(\Delta(V))$. This concludes the proof.
\end{proof}

\begin{proof}[Proof of Theorem~\ref{main}]
    Without loss of generality, we can assume $s \leq t$. Our main goal is to show that for any $\epsilon>0$ there exists $\delta$ such that every bipartite graph $G = (U \sqcup V, E)$ with $|V| \geq \epsilon|U|^{\sigma_s}$ and $|E| \geq \delta|V|$ must contain a copy of $K_{s,t}'$.

    The $s=1$ case is simple: we can delete every vertex $v \in V$ with degree $d(v) < t$ while the resulting graph still contains half of its original edges (due to a large enough $\delta$); then we can find a vertex $u \in U$ with $d(u) \geq t$ and greedily embed a copy of $K_{1,t}'$ using $N_G(a)$. It is easy to argue that any number smaller than $\sigma_1 = 1$ cannot be a linear threshold of $K_{1,2}'$. For example, we can add isolated vertices to the right hand side of a star graph $K_{m,1}$.

    To deal with the $s \geq 2$ case, we observe that by an averaging argument, a bipartite graph $G = (U \sqcup V, E)$ with $|V| \geq \epsilon|U|^\sigma$ and $|E| \geq \delta |V|$ contains a subgraph $G' = ( U'\sqcup V', E')$ with $|V'| \geq |U'|^{\sigma}$ and $|E'| \geq \epsilon^{\frac{1}{\sigma}}\delta|V'|$. Hence, we can apply Lemma~\ref{biregular} and Proposition~\ref{cjl_Kst} to $G'$.

    The ``Moreover'' part follows from a simple probabilistic construction: include each edge in an $m$-by-$n$ bipartite random graph with probability $p$ independently, and delete a vertex on the left hand side from each $K_{s,t}'$ in the random graph. We can upper bound the expected number of $K_{s,t}'$ in this random graph by $O(m^{s+t}n^{st}p^{2st})$, and we notice the expected number of edges is $mnp$. Hence by setting \begin{equation*}
        m^{s+t}n^{st}p^{2st} \cdot n \ll mnp \quad\text{and}\quad n \ll mnp,
    \end{equation*} we can guarantee the resulting graph contains $\omega(n)$ many edges and is $K_{s,t}'$-copy-free. These condition can be satisfied with $n \ll m^{2 - 1/s -1/t}$. Hence it suffices to take $1/t < \sigma_s - \sigma$ to conclude $\sigma$ is smaller than the linear threshold of $K_{s,t}'$.
\end{proof}

\section{Incidence bounds via linear thresholds}\label{sec_threshold2incidence}

We outline a proof of Theorem~\ref{threshold2incidence} in this section for the sake of completeness. This theorem is essentially due to Fox, Pach, Sheffer, Suk, and Zahl~\cite{fox2017semi} where the special case $H = K_{s,t}$ and $\sigma = s$ is proven as their Theorem~1.2. In particular, all our ``variety''-related notions are identical to those in \cite{fox2017semi}, which are standard in the study of real algebraic geometry. The proof is an induction on the following more general statement.

\begin{proposition}
Let $H$ be a bipartite graph with its linear threshold at most $\sigma$. Suppose $P$ is a set of $m$ points and $\mathcal{V}$ is a set of $n$ constant-degree algebraic varieties, both in $\mathbb{R}^d$, such that the incidence graph $I(P,\mathcal{V})$ does not contain a copy of $H$. Suppose furthermore that $P$ is fully contained in an irreducible variety $V$ of dimension $e$ and degree $D$, and no member $S\in \mathcal{V}$ contains $V$. Then for every $\epsilon > 0$ and as $m,n\to \infty$, we have
\begin{equation}\label{threshold2incidence1}
|I(P,\mathcal{V})| \leq \alpha_{1,e}m^{\frac{(e-1)\sigma}{e\sigma-1}+\epsilon}n^{\frac{e(\sigma-1)}{e\sigma-1}}+\alpha_{2,e}(m+n),
\end{equation}
where $\alpha_{1,e}$ and $\alpha_{2,e}$ are constants depending on $H,\sigma,d,e,D, \epsilon$ and the degrees of varieties in $\mathcal{V}$.
\end{proposition}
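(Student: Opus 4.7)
The plan is to induct on the dimension $e$ of the ambient irreducible variety $V$. The base case $e = 1$ is immediate from Bezout's theorem: each $S \in \mathcal{V}$ not containing the curve $V$ meets it in $O_D(1)$ points, hence $|I(P,\mathcal{V})| = O(n)$, which matches \eqref{threshold2incidence1} at $e=1$ since the main term reduces to $m^0 n^1 = n$. Before tackling the inductive step, I would promote the linear-threshold hypothesis into a Kővári–Sós–Turán–style bound: any $H$-free bipartite graph $(A \sqcup B, E)$ with $H$ having linear threshold at most $\sigma$ satisfies $|E| \leq C(|A|\,|B|^{1-1/\sigma} + |B|)$. This is obtained by partitioning $A$ into blocks of size $\lfloor |B|^{1/\sigma} \rfloor$ and applying the linear-threshold conclusion to each block.

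For the inductive step at dimension $e \geq 2$, I would apply the polynomial partitioning theorem on $V$ (Guth, or Matoušek–Patáková) with a parameter $r$: find a polynomial $g$ of degree $r$ not vanishing on $V$ such that $V \setminus Z(g)$ decomposes into $O(r^e)$ open cells $\Omega_i$, each carrying at most $O(m/r^e)$ points. Write $P = P' \sqcup P''$ with $P' = P \cap Z(g)$. For the off-$Z(g)$ contribution, let $P_i = P'' \cap \Omega_i$ and $\mathcal{V}_i$ be the set of varieties meeting $\Omega_i$: since no $S$ contains $V$, the intersection $S \cap V$ is a variety of dimension at most $e-1$ of bounded degree and meets $O(r^{e-1})$ cells, so $\sum_i |\mathcal{V}_i| = O(nr^{e-1})$. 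Applying the KST-style bound in each cell followed by Hölder's inequality gives a total cell contribution of $O\bigl(m n^{1-1/\sigma} r^{1/\sigma - 1} + nr^{e-1}\bigr)$, and balancing at $r \sim (m/n^{1/\sigma})^{\sigma/(e\sigma-1)}$ produces the desired main term $m^{(e-1)\sigma/(e\sigma-1)} n^{e(\sigma-1)/(e\sigma-1)}$.

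For the on-$Z(g)$ contribution, decompose $Z(g) \cap V$ into irreducible components $V_1,\dots,V_k$ of dimension at most $e-1$ with degrees summing to $O(rD)$. Assign each point of $P'$ to a single component containing it, producing $P' = \bigsqcup_j P'_j$ with $P'_j \subseteq V_j$. For each $V_j$, separate $\mathcal{V}$ into varieties containing $V_j$ (of which there are $O_D(1)$, contributing at most $O(m)$ incidences) and varieties not containing $V_j$, for which the inductive hypothesis at dimension $e-1$ applies with $V_j$ in place of $V$. Summing these inductive bounds across the components fits inside the $O(m+n)$ term and the main term of \eqref{threshold2incidence1}, provided $r$ is controlled by a power of $m+n$.

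The main obstacle is that the constants arising from the polynomial partitioning, and from each invocation of the inductive hypothesis, depend polynomially on the degree $rD$ of the partitioning variety; so taking $r$ as large as the balancing step demands inflates the constants appearing in the lower-dimensional invocation. To propagate the induction cleanly one must therefore set up the statement with slightly suboptimal exponents at each dimension, which is what forces the $+\epsilon$ in the exponent of $m$ in \eqref{threshold2incidence1}. A secondary technical subtlety is the careful handling of varieties containing components of $Z(g) \cap V$, which requires a greedy component-assignment of points to avoid double-counting and to absorb the resulting incidences into the $O(m+n)$ term.
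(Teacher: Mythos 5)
Your proposal has a genuine gap in the treatment of the on-$Z(g)$ incidences, and this is precisely the one place where the $H$-freeness hypothesis must do real work. You claim that for each irreducible component $V_j$ of $Z(g)\cap V$ the number of members of $\mathcal{V}$ containing $V_j$ is $O_D(1)$. This is false: nothing prevents all $n$ varieties from containing a single component (for instance, $V_j$ a point through which every variety passes, or $V_j$ a line contained in $n$ distinct planes). The correct argument, which is the one the paper uses, is a dichotomy forced by the forbidden subgraph: if some component contained at least $|A|$ points of $P$ \emph{and} were contained in at least $|B|$ members of $\mathcal{V}$ (where $A\sqcup B$ is the bipartition of $H$), the incidence graph would contain $K_{|A|,|B|}\supseteq H$. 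Hence each component either hosts fewer than $|A|$ points (contributing at most $|A|\cdot n$ over all components, since the number of components is bounded) or lies in fewer than $|B|$ varieties (contributing at most $|B|\cdot|P'|$). Without this step your on-$Z(g)$ bound simply does not hold, and no amount of greedy assignment of points to components repairs it.

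Separately, your cellular argument diverges from the paper's in a way that leaves the hardest issue unresolved. You choose $r$ at the balancing value, a positive power of $m$, so the partitioning polynomial has unbounded degree; you correctly observe that the number of components of $Z(g)\cap V$ and the degree $D$ fed into the dimension-$(e-1)$ inductive call then grow polynomially in $m$, but ``set up the statement with slightly suboptimal exponents at each dimension'' is not a proof of how to absorb this --- the multiplicative factor $\gamma_1=\mathrm{poly}(r)$ in front of the inductive bound for $I_1$, and the $\mathrm{poly}(r)$ blowup of $\alpha_{1,e-1},\alpha_{2,e-1}$ through their dependence on $D$, are exactly what must be controlled. The paper avoids the problem entirely: it takes $r$ to be a \emph{constant} depending only on $\epsilon$, and runs a second induction on $m+n$, applying the inductive hypothesis (not a K\H{o}v\'ari--S\'os--Tur\'an-type bound) inside each cell; the factor $r^{-\epsilon}$ gained from $m_i\le m/r$ is what closes that induction and is the actual source of the $+\epsilon$ in the exponent. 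Your derivation of the KST-style bound $|E|\le C\bigl(|A|\,|B|^{1-1/\sigma}+|B|\bigr)$ from the linear-threshold hypothesis is correct and the exponent bookkeeping in the Hölder step checks out, but to turn your outline into a proof you would either need to adopt the paper's constant-degree partitioning with the double induction, or carry out a genuinely quantitative version of your ``suboptimal exponents'' bootstrapping, which you have not done.
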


\begin{proof}

We will induct both on $e$ and $m+n$. Clearly, one can take $V=\mathbb{R}^d$ in the above statement to conclude Theorem~\ref{threshold2incidence}. First, we consider the base case for the induction. If $m+n$ is small, then \eqref{threshold2incidence1} is implied by choosing sufficiently large $\alpha_{1,e}$ and $\alpha_{2,e}$. Similarly, when $e=0$, \eqref{threshold2incidence1} is also guaranteed by choosing sufficiently large $\alpha_{1,0}$ and $\alpha_{2,0}$.

By the definition of linear threshold, we can assume without loss of generality that $n < m^\sigma$. Otherwise, there exists a constant $\delta$ depending on $H$ such that $|I(P,\mathcal{V})| < \delta n$ and we can take $\alpha_{2,e} > \delta$ to ensure \eqref{threshold2incidence1}. In particular, we have\begin{equation}\label{threshold2incidence2}
    n = n^\frac{e-1}{e\sigma-1}n^\frac{e(\sigma-1)}{e\sigma-1} < m^\frac{(e-1)\sigma}{e\sigma-1}n^\frac{e(\sigma-1)}{e\sigma-1}.
\end{equation}

Next, we consider the inductive step. Assume that $\eqref{threshold2incidence1}$ holds when $|P|+|\mathcal{V}| < m + n$ or $\dim V < e$. By polynomial partitioning (Theorem~4.2 in \cite{fox2017semi}, see also~\cite{guth2015erdHos}), we find a polynomial $f$ of degree at most $C_{part} \cdot r^{1/e}$, where $C_{part}$ depends only on $e$ and $r$ shall be determined later, such that $f$ does not completely vanish on $V$ and no connected components of $V \setminus Z(f)$ contains more than $(1/r)$-fraction of $P$.

Let $\Omega_1,\Omega_2,\dots,\Omega_z$ be the connected components of $V\setminus Z(f)$. By a Milnor--Thom-type result (e.g. Theorem~A.2 in \cite{solymosi2012incidence}), there exists a constant $C_{cell}$ depending on $d,e,D$ such that $z \leq C_{cell} \cdot r$. We divide the edge set of $I(P,\mathcal{V})$ into three subsets:
\begin{itemize}
\item $I_1$ consists of the incidences $(p,S)$ such that $p\in Z(f)$ and $S$ properly intersects every irreducible component of $V \cap Z(f)$ that contains $p$.
\item $I_2$ consists of the incidences $(p,S)$ such that $p$ is contained in an irreducible component of $V \cap Z(f)$ that is fully contained by $S$.
\item $I_3$ consists of incidences not in $I_1$ or $I_2$, that is, all of $(p,S)$ such that $p$ is not contained in $V \cap Z(f)$.
\end{itemize}

\noindent \textbf{Bounding $|I_1|$}: Let $V' = V \cap Z(f)$ and let $m_0 = |P \cap V'|$. Since $f$ does not vanish on $V$, $V'$ is a variety of dimension $e' \leq e-1$ and can be written as a union of $\gamma_1$ irreducible (over $\mathbb{R}$) components, each of dimension at most $e'$ and degree at most $\gamma_2$. Here $\gamma_1$ and $\gamma_2$ only depends $D$, $e$, $d$ and the degree of $f$ (see e.g. \cite{galligo1995complexity}). We can apply the inductive hypothesis to obtain\begin{equation*}
    |I_1| \leq  \gamma_1 \cdot \alpha_{1,e-1}m_0^{\frac{(e-2)\sigma}{(e-1)\sigma-1}+\epsilon}n^{\frac{(e-1)(\sigma-1)}{(e-1)\sigma-1}}+\alpha_{2,e-1}(m_0+n).
\end{equation*} Here, the constants $\alpha_{1,e-1}$ and $\alpha_{2,e-1}$ are independent of $m$ and $n$. Using $n<m^\sigma$, we can check that\begin{equation*}
    m^{\frac{(e-2)\sigma}{(e-1)\sigma-1}}n^{\frac{(e-1)(\sigma-1)}{(e-1)\sigma-1}} \leq  m^{\frac{(e-1)\sigma}{e\sigma-1}}n^{\frac{e(\sigma-1)}{e\sigma-1}}.
\end{equation*} (A detailed computation can be found as equation (12) in \cite{fox2017semi}.) Combining above inequalities, absorbing the $a_{2,e-1}\cdot n$ term by \eqref{threshold2incidence2}, and choosing sufficiently large $\alpha_{1,e},\alpha_{2,e}$, we can conclude\begin{equation}\label{boundingI1}
    |I_1| \leq \frac{\alpha_{1,e}}{3}m^{\frac{(e-1)\sigma}{e\sigma-1}+\epsilon}n^{\frac{e(\sigma-1)}{e\sigma-1}}+\frac{\alpha_{2,e}}{2}m_0.
\end{equation}

\noindent \textbf{Bounding $|I_2|$}: Denote the vertex set of $H$ as $A \sqcup B$. Note that $V'$ can be written as a union of $\gamma_1$ irreducible (over $\mathbb{R}$) components, each of dimension at most $e-1$, where $\gamma_1$ depends only on $D$, $e$, $d$ and the degree of $f$. Since the incidence graph of $I(P,\mathcal{V})$ does not contain a copy of $H$, each of the irreducible components of $V'$ either contains less than $|A|$ points of $P$, or is contained in less than $|B|$ members of $\mathcal{V}$. The contribution to $|I_2|$ from the first scenario is at most $|A| \gamma_1 n$, and from the second scenario is at most $|B|m_0$. Combining these bounds with \eqref{threshold2incidence2} and choosing $\alpha_{1,e},\alpha_{2,e}$ to be sufficiently large depending on $H$ and $\gamma_1$, we have
\begin{equation}\label{boundingI2}
|I_2| \le \frac{\alpha_{1,e}}{3}m^{\frac{(e-1)s}{es-1}+\epsilon}n^{\frac{e(s-1)}{es-1}}+\frac{\alpha_{2,e}}{2}m_0.
\end{equation}

\noindent \textbf{Bounding $|I_3|$}: Let $P_i := P\cap \Omega_i$ and $\mathcal{V}_i$ be the set of elements in $\mathcal{V}$ that properly intersect $\Omega_i$. Write $m_i = |P_i|$ and $n_i = |\mathcal{V}_i|$. By the property of the polynomial $f$, we have an upper bound $m_i \leq m/r$. Note also that $m' := \sum_{i=1}^z m_i = m-m_0$. By a B\'ezout-type theorem, see e.g. \cite{solymosi2012incidence} Theorem~A.2, and proper intersection, there exists a constant $C_{inter}$ such that \begin{equation*}
    \sum_{i=1}^z n_i \leq C_{inter} \cdot \kappa n r^{(e-1)/e},
\end{equation*} where $\kappa$ is the maximum degree of the varieties in $\mathcal{V}$. By H\"older's inequality, we have\begin{align*}
    \sum_{i=1}^z n_i^{\frac{e(\sigma-1)}{e\sigma-1}} &\leq \left( \sum_{i=1}^z n_i \right)^{\frac{e(\sigma-1)}{e\sigma-1}} \left( \sum_{i=1}^z 1 \right)^{\frac{e-1}{e\sigma-1}}\\
    &\leq \left( C_{inter}\cdot\kappa n r^{(e-1)/e} \right)^{\frac{e(\sigma-1)}{e\sigma-1}} \left( C_{cell} \cdot r \right)^{\frac{e-1}{e\sigma-1}}\\
    &\leq C_{Hold} \cdot n^{\frac{e(\sigma-1)}{e\sigma-1}} r^{\frac{(e-1)\sigma}{e\sigma-1}}, 
\end{align*} where $C_{Hold}$ depends on $\kappa,C_{inter},C_{cell},e,\sigma$.

By the induction hypothesis, we have \begin{align}
|I_3| = \sum_{i=1}^z |I(P_i,\mathcal{V}_i)| &\le \sum_{i=1}^z \left(\alpha_{1,e} m_i^{\frac{(e-1)\sigma}{e\sigma-1}+\epsilon}n_i^{\frac{e(\sigma-1)}{e\sigma-1}}+\alpha_{2,e}(m_i+n_i)\right) \notag\\
&\le \alpha_{1,e} m^{\frac{(e-1)\sigma}{e\sigma-1}+\epsilon}\left(r^{\frac{(e-1)\sigma}{e\sigma-1}+\epsilon}\right)^{-1} \sum_{i=1}^z n_i^{\frac{e(\sigma-1)}{e\sigma-1}} + \sum_{i=1}^z\alpha_{2,e}(m_i+n_i) \notag\\
&= \alpha_{1,e} C_{Hold}\cdot r^{-\epsilon}m^{\frac{(e-1)\sigma}{e\sigma-1}+\epsilon}n^{\frac{e(\sigma-1)}{e\sigma-1}} + \alpha_{2,e}m'+ \alpha_{2,e}n C_{inter}\cdot \kappa r^{(e-1)/e} \notag\\
&\leq \frac{\alpha_{1,e}}{3}m^{\frac{(e-1)\sigma}{e\sigma-1}+\epsilon}n^{\frac{e(\sigma-1)}{e\sigma-1}} + \alpha_{2,e}m'.\label{boundingI3}
\end{align} Here in the last step, we first choose $r$ to be sufficiently large depending on $\epsilon$ and $C_{Hold}$ such that $C_{Hold}\cdot r^{-\epsilon}\leq 1/6$. Then we use \eqref{threshold2incidence2} and choose $\alpha_{1,e}$ to be sufficiently large depending on $r, C_{inter},\kappa,\alpha_{2,e}$, absorbing the last term into the first term.

\medskip

We conclude the inductive process by combining \eqref{boundingI1}, \eqref{boundingI2}, and \eqref{boundingI3}.
\end{proof}

\section{Grid-free complex point-line arrangements}\label{sec_grid}

In this section, we prove the theorem below from which Theorem~\ref{grid} follows as a corollary. Our proof is inspired by a constant-degree-polynomial-partitioning technique of Solymosi and Tao~\cite{solymosi2012incidence}. Here, a \textit{2-flat} refers to a 2-dimensional affine subspace.

\begin{theorem}\label{grid_2flat}
Let $P$ be a set of $m$ points and $\mathcal{F}$ be a set of $n$ 2-flats, both in $\mathbb{R}^4$, such that every pair of 2-flats in $\mathcal{F}$ has at most one intersection, and the incidence graph $I(P,\mathcal{F})$ does not contain a copy of $K_{s,t}'$. Then for every $\epsilon > 0$, we have
\begin{equation}\label{eq:grid_2flat}
    |I(P,\mathcal{F})| \leq C_1 m^{\frac{2s-1}{3s-2}+\epsilon}n^{\frac{2s-2}{3s-2}}+C_2(m+n),
\end{equation}
where $C_1, C_2$ are large constants depending on $s,t,\epsilon$.
\end{theorem}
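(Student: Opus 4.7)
The exponents $\tfrac{2s-1}{3s-2}$ and $\tfrac{2s-2}{3s-2}$ in \eqref{eq:grid_2flat} are precisely those produced by Theorem~\ref{threshold2incidence} when specialized to dimension $e=2$ and linear threshold $\sigma = \sigma_s = 2 - 1/s$, the latter guaranteed by Theorem~\ref{main} for any $K_{s,t}'$-free incidence graph. The plan is to iterate constant-degree polynomial partitioning in the style of Solymosi--Tao so as to push the effective ambient dimension of the problem from $4$ down to $2$, where Theorem~\ref{threshold2incidence} can be applied directly.

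I would induct on $m+n$; the base case is trivial. By Theorem~\ref{main} applied to $I(P,\mathcal{F})$ as an abstract bipartite graph, we may restrict to the range $n \leq c\,m^{\sigma_s}$. Fix a large constant $D = D(s,t,\epsilon)$ and apply polynomial partitioning in $\mathbb{R}^4$ to obtain $f$ of degree at most $D$ whose zero set partitions $\mathbb{R}^4 \setminus Z(f)$ into $O(D^4)$ cells, each containing at most $m/D^4$ points of $P$. Incidences with $p \notin Z(f)$ sum back, via the Milnor--Thom bound that each 2-flat meets $O(D^2)$ cells and a Hölder-type convexity argument exactly as in the bounding of $|I_3|$ inside the proof of Theorem~\ref{threshold2incidence}, to a term of the form $\tfrac{1}{2}C_1 m^{(2s-1)/(3s-2)+\epsilon} n^{(2s-2)/(3s-2)} + O(m+n)$ provided $D$ is large enough.

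The remaining incidences have $p \in Z(f)$, a 3-dimensional variety, and here I would apply a second round of constant-degree polynomial partitioning on $Z(f)$, this time with polynomial $g$ of degree at most $D'$ not vanishing on any 3-dim irreducible component of $Z(f)$. Cells of $Z(f)\setminus Z(g)$ are still open subsets of $\mathbb{R}^4$, so the inductive hypothesis of Theorem~\ref{grid_2flat} itself applies to each (both the pairwise-at-most-one-point and the $K_{s,t}'$-free hypotheses are monotone under restriction), once again summing via Hölder to absorb into the target bound. The remaining points lie on $W := Z(f)\cap Z(g)$, a variety of dimension at most $2$. Each 2-flat $F\in\mathcal{F}$ contributes incidences on $W$ in one of three ways: if $F\subseteq W$, then $F$ is a 2-dim irreducible component of $W$, and there are only $O(DD')$ such, contributing $O(DD'\,m)$ incidences absorbable into $C_2 m$; if $F\cap W$ is finite, it has size $O(DD')$ by a Bézout-type argument, contributing $O(DD'\,n)$ incidences absorbable into $C_2 n$; otherwise $F\cap W$ is a 1-dim algebraic curve of bounded degree lying on $W$. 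In the last case I would decompose $W$ into its $O(DD')$ irreducible 2-dim components and apply Theorem~\ref{threshold2incidence} componentwise with $d=4$, $e=2$, $\sigma=\sigma_s$, which yields precisely the exponents $\tfrac{2s-1}{3s-2}$ and $\tfrac{2s-2}{3s-2}$.

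The main obstacle is closing the two-level recursion cleanly: at each partitioning level Hölder's inequality must supply a $D^{-\epsilon}$ (resp.\ $D'^{-\epsilon}$) saving large enough to absorb the constant-order Milnor--Thom overheads coming from cell crossings and from the decomposition of $W$ into irreducible components, so that together with the contribution from invoking Theorem~\ref{threshold2incidence} on $W$ the induction closes with constants $C_1,C_2$ independent of $m,n$. A lesser difficulty is the bookkeeping for 2-flats contained in $Z(f)$ at the first level, which do not yield 1-dim curves on $Z(f)$ in the usual sense; but the three-way case analysis on the second level handles them uniformly with the rest, so this does not obstruct the plan.
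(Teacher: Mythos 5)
Your first-level partition and its cellular H\"older computation match the paper's, and your observation that the target exponents are the $e=2$, $\sigma=2-1/s$ exponents of Theorem~\ref{threshold2incidence} is exactly right. The gap is in your treatment of the points on $Z(f)$, specifically of the 2-flats that are \emph{entirely contained} in $Z(f)$. Such a flat meets $Z(f)\setminus Z(g)$ in a two-dimensional set and therefore crosses on the order of $(\deg g)^2$ of the roughly $(\deg g)^3$ cells of your second-level partition, as opposed to the $O(\deg g)$ cells crossed by a flat meeting $Z(f)$ in a curve. Writing $\alpha=\frac{2s-1}{3s-2}$, $\beta=\frac{2s-2}{3s-2}$ and letting $r$ be the number of second-level cells, the H\"older step then produces a factor $r^{1-\alpha-\beta/3}=r^{(s-1)/(9s-6)}$, a \emph{positive} power of the partition parameter for $s\geq 2$: the identity $\alpha+\beta/2=1$ is tuned to four-dimensional partitioning, not to a 2-flat living inside a 3-fold, so the cellular term at the second level cannot be absorbed. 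Your closing remark that the three-way case analysis on $W=Z(f)\cap Z(g)$ ``handles them uniformly'' does not repair this: that analysis only accounts for incidences located on $W$, whereas the problematic incidences occur at points of $Z(f)\setminus Z(g)$ lying on flats contained in $Z(f)$, and there can be up to $n$ such flats (e.g.\ a family of parallel 2-flats inside a hyperplane component of $Z(f)$). It is a warning sign that your argument never makes essential use of the hypothesis that two flats of $\mathcal{F}$ meet in at most one point.

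The paper closes exactly this hole with the Solymosi--Tao generic-derivative trick: it builds hypersurfaces $S_0=\{Q=0\},S_1,\dots,S_D$, each cut out by a generic linear combination of partials of the previous defining polynomial, and assigns to each $p\in P\cap\{Q=0\}$ the least $i$ with $p\notin S_{i+1}$, which makes $p$ a smooth point of $S_i$ with three-dimensional tangent space. Two distinct 2-flats through $p$ inside that tangent space would meet in a line, so the one-intersection hypothesis forces at most one incident flat to be contained in $S_i$ (contributing $O(m)$ incidences in total), and every other incident flat meets $S_i$ in a bounded-degree curve; these curves are then projected to a generic plane and Theorem~\ref{threshold2incidence} is applied with $d=2$. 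If you keep your second partition for the flats transverse to $Z(f)$, you still need an argument of this kind for the contained flats; as written the induction does not close. (Minor further points: cells of $Z(f)\setminus Z(g)$ are not open in $\mathbb{R}^4$, though this does not affect the applicability of the inductive hypothesis; and your componentwise application on $W$ needs the $e=2<d$ version of Theorem~\ref{threshold2incidence} stated as the proposition in Section~\ref{sec_threshold2incidence}, which is available.)
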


\begin{proof}

We shall prove \eqref{eq:grid_2flat} by induction on $m$, where $C_1$ and $C_2$ are to be chosen later. The claim holds trivially for $m=0$. We assume that $m \geq 1$ and \eqref{eq:grid_2flat} holds for smaller values in place of $m$.

Observe that we may restrict our attention to values of $n$ such that \begin{equation*}
    m^{\frac{1}{2}}\leq n \leq m^{2-\frac{1}{s}}.
\end{equation*} After choosing $C_2$ to be large enough (depending on $s,t$), the first inequality comes from the K\H{o}v\'ari--S\'os--Tur\'an bound together with the hypothesis that two flats intersect at most once (see Section~3 in \cite{solymosi2012incidence}), and the second inequality comes from our Theorem~\ref{main}.

We may assume that every point has at least half of the average incident 2-flats. If a point is incident to fewer 2-flats, we delete it. By polynomial partitioning (Corollary~5.5 in \cite{solymosi2012incidence}, see also~\cite{guth2015erdHos}), there exists a degree $D < (12M)^{1/4}$ polynomial $Q$ which partitions $\mathbb{R}^4 \setminus \{ Q=0 \}$ into $M$ components
\begin{equation*}
    \mathbb{R}^4 = \{ Q=0 \} \cup \Omega_1 \cup \dots \cup \Omega_M
\end{equation*} such that no component contains more than $m/M$ points, where M is a constant depending on $\epsilon$ and $s$ to be determined later.

By the Harnack curve theorem~\cite{harnack1876ueber}, each 2-flat not on the surface $\{Q=0\}$ meets at most $D^2$ cells in $\{\Omega_i\}$. Let $\mathcal{F}_i$ denote the set of 2-flats in $\mathcal{F}$ intersecting $\Omega_i$, then we have
\begin{equation*}
    \sum_{i=1}^{M} |\mathcal{F}_i| \leq D^2n.
\end{equation*}

Now using the induction hypothesis, we have
\begin{align*}
    |I(P \cap \Omega_i,\mathcal{F}_i)| 
    &\leq C_1 |P \cap \Omega_i|^{\frac{2s-1}{3s-2}+\epsilon} |\mathcal{F}_i|^{\frac{2s-2}{3s-2}} + C_2 \left( \frac{m}{M} + |\mathcal{F}_i| \right)\\
    &\leq C_1 \left( \frac{m}{M} \right) ^{\frac{2s-1}{3s-2}+\epsilon} |\mathcal{F}_i|^{\frac{2s-2}{3s-2}} + C_2 \left( \frac{m}{M} + |\mathcal{F}_i| \right).
\end{align*}

Also, by H\"older's inequality, we have
\begin{equation*}
    \sum_{i=1}^{M} |\mathcal{F}_i|^{\frac{2s-2}{3s-2}} \leq \left( \sum_{i} |\mathcal{F}_i| \right)^{\frac{2s-2}{3s-2}} M^{\frac{s}{3s-2}} \leq \left( D^2 n \right)^{\frac{2s-2}{3s-2}} M^{\frac{s}{3s-2}}.
\end{equation*}

From these, we get
\begin{align*}
    \sum_{i=1}^{M} |I(P \cap \Omega_i,\mathcal{F}_i)|
    &\leq \sum_{i=1}^{M} \left[ C_1 \left( \frac{m}{M} \right) ^{\frac{2s-1}{3s-2}+\epsilon} |\mathcal{F}_i|^{\frac{2s-2}{3s-2}} + C_2 \left( \frac{m}{M} + |\mathcal{F}_i| \right) \right]\\
    &\leq C_1 \left( \frac{m}{M} \right) ^{\frac{2s-1}{3s-2}+\epsilon} \sum_{i=1}^{M} |\mathcal{F}_i|^{\frac{2s-2}{3s-2}} + C_2 m + C_2 \sum_{i=1}^{M} |\mathcal{F}_i|\\
    &\leq C_1 12^{\frac{s-1}{3s-2}} M^{-\epsilon} m^{\frac{2s-1}{3s-2}+\epsilon} n^{\frac{2s-2}{3s-2}} + C_2 \left( m + D^2n \right),
\end{align*} where we used that $D \leq (12M)^{1/4}$.

If we choose $M$ large enough, say $M \geq (4 \times 12^{1/3})^\frac{1}{\epsilon}$, we get
\begin{equation*}
    \sum_{i=1}^{M} |I(P \cap \Omega_i,\mathcal{F}_i)| \leq \frac{1}{2} C_1 m^{\frac{2s-1}{3s-2}+\epsilon} n^{\frac{2s-2}{3s-2}} + C_2 \left( m + D^2n \right).
\end{equation*}
Notice that
\begin{equation*}
   m^{\frac{1}{2}}\leq n \leq m^{2-\frac{1}{s}} \implies m^{\frac{2s-1}{3s-2}} n^{\frac{2s-2}{3s-2}} \geq \max\{m,n\}.
\end{equation*}
Thus, we can guarantee
\begin{equation*}
\frac{1}{3}C_1 m^{\frac{2s-1}{3s-2}+\epsilon} n^{\frac{2s-2}{3s-2}} \geq C_2 (m+D^2n),
\end{equation*} by taking $C_1$ to be a large constant depending on $M$ and $C_2$. Thus, the second term can be absorbed into the first term. Therefore,
\begin{align}
    \notag \sum_{i=1}^{M} |I(P \cap \Omega_i,\mathcal{F}_i)|
    &\leq \frac{1}{2} C_1 m^{\frac{2s-1}{3s-2}+\epsilon} n^{\frac{2s-2}{3s-2}} + \frac{1}{3}C_1 m^{\frac{2s-1}{3s-2}+\epsilon} n^{\frac{2s-2}{3s-2}}\\
    &\leq \frac{5}{6} C_1 m^{\frac{2s-1}{3s-2}+\epsilon} n^{\frac{2s-2}{3s-2}}. \label{eq:offvariety}
\end{align}

Next, we estimate the incidences on the surface $\{ Q=0 \}$. In order to avoid singular points on $\{ Q=0 \}$, we perform a technical decomposition by introducing the following sequence of hypersurfaces $S_0, S_1, \dots, S_D$ and setting $S_0 := \{ Q=0 \}$, $Q_0 := Q$, and
\begin{equation*}
    Q_{i+1} = \sum_{j=1}^{4} \alpha_j^{(i)} \frac{\partial}{\partial x_j}Q_i \quad \text{for $1 \leq i \leq D$,}
\end{equation*} which defines the surface $S_{i+1} = \{ Q_{i+1} = 0 \}$, where the $\alpha_j^{(i)}$ are generic reals. All $S_i$ have degree at most $D$, and for each point $p \in P \cap \{ Q=0 \}$, there is an index $i = \text{ind}(p)$ between $0$ and $D$ which is the smallest $i$ such that $p \notin S_{i+1}$. We remark that this makes $p$ a smooth point of $S_i$, so that the tangent space to $S_i$ at $p$ is three-dimensional. Thus, the tangent space to $S_i$ at $p$ may contain at most one 2-flat in $\mathcal{F}$, and at most one 2-flat in $\mathcal{F}$ incident to $p$ is entirely contained in $S_i$. All other 2-flats incident to $p$ intersect $S_i$ in a one-dimensional curve. We consider the set of points $P_i = \{ p \in P:\ \text{ind}(p) = i \}$ and bound the number of incidences between $P_i$ and $\mathcal{F}$. The 2-flats on $S_i$ give at most $|P_i|$ incidences, so it is sufficient to consider the incidences between $P_i$ and the set of 2-flats intersecting $S_i$ in a curve. We call these intersection curves $\mathcal{V}_i$ for every $0 \leq i \leq D$. The curves in $\mathcal{V}_i$ are one-dimensional algebraic curves of some constant degree (depending on $D$). Let us project the points $P_i$ and the curves $\mathcal{V}_i$ onto a generic plane, and apply Theorem~\ref{threshold2incidence} with $d=2$ (because of projection) and $\sigma = 2- 1/s$ (because of Theorem~\ref{main}). Thus, for each $0 \leq i \leq D$ we obtain
\begin{equation*}
    |I(P_i, \mathcal{V}_i)|\leq C_3\left( |P_i|^{\frac{2s-1}{3s-2}+\epsilon}|\mathcal{V}_i|^{\frac{2s-2}{3s-2}} + |P_i| + |\mathcal{V}_i| \right),
\end{equation*} where $C_3$ is a constant depending on $D$, $\epsilon$, $s$, and $t$. Thus, we have that the number of incidences on $\{ Q = 0 \}$ is at most
\begin{equation}\label{eq:onvariety}
    \sum_{i=0}^{D} |I(P_i, \mathcal{V}_i)| + \sum_{i=0}^{D} |P_i| \leq C_3(D+1)\left(m^{\frac{2s-1}{3s-2}+\epsilon'} n^\frac{2s-2}{3s-2} + m + n\right) + m
\end{equation}
Thus, if we choose constants $C_1$, $C_2$ to be sufficiently large depending on $C_3$ and $D$, we can upper bound the quantity in \eqref{eq:onvariety} by one-sixth of the first term in \eqref{eq:grid_2flat}. Therefore, we can combine \eqref{eq:offvariety} and \eqref{eq:onvariety} to conclude the inductive proof.
\end{proof}

\begin{proof}[Proof of Theorem~\ref{grid}]
Observe that, after a point-line duality (see Section~5.1 of \cite{matousek2013lectures}), an $s$-by-$s$ grid corresponds to a copy of $K_{s,s}'$ in the incidence graph. Identify $\mathbb{C}$ with $\mathbb{R}^2$ and apply Theorem~\ref{grid_2flat} to $P$, $L$, and $\mathbb{C}^2 = \mathbb{R}^4$ to obtain
\begin{align*}
    |I(P,L)| &\leq C_1 n^{\frac{2s-1}{3s-2}+\epsilon}n^{\frac{2s-2}{3s-2}}+C_2(n+n)\\
    &\leq C_1 n^{\frac{4s-3}{3s-2}+\epsilon} + 2C_2n\\
    &\leq C_4 n^{\frac{4}{3}-\frac{1}{9s-6}+\epsilon},
\end{align*} where $C_4$ is some sufficiently large constant depending on $C_1$ and $C_2$.
\end{proof}

\section{Distinct distances under local conditions}\label{sec_distance}

In this section, we prove Theorem~\ref{distance} by transforming it into a point-variety incidence problem in $\mathbb{R}^4$, thereby allowing us to apply Theorems~\ref{main} and~\ref{threshold2incidence}. We remark that each incidence in our framework will correspond to a distance repetition, i.e. two pairs of points with the same distance, but the relationship between the number of distinct distances and the number of distance repetitions is subtle. For example, if we are given distance repetitions $|ab| = |cd|$, $|cd| = |xy|$, and $|ab| = |xy|$, then the third repetition gives no useful information. To overcome this subtlety, we leverage the fact that our $K_{s,t}'$-free incidence bound does not depend on $t$ asymptotically (together with some bookkeeping).

\begin{proof}[Proof of Theorem~\ref{distance}]

Let $P$ be the set of $n$ points such that every $p$ points determine at least $q$ distinct distances, where
\begin{equation*}
        q = \binom{p}{2} - p + 3 \cdot \left\lfloor \frac{p}{2s} \right\rfloor + 2s + 2.
\end{equation*}

We define \begin{equation*}
        \mathcal{E} = \{ (a,b,c,d) \in P^4: |ac| = |bd| > 0 \}
\end{equation*} where $|ab|$ denotes the distance between $a,b \in \mathbb{R}^2$. Suppose the number of distinct distances determined by $P$ is $r$ and suppose these distances are $d_1, \ldots, d_r$. We write $E_i = \{ (a,b) \in P^2 : |ab| = d_i\}$ so that $\sum_{i=1}^{r} |E_i| = n(n-1)$. Using Jensen's inequality, we have
\begin{equation*}
    |\mathcal{E}| = \sum_{i=1}^{r} |E_i|^2 \geq \frac{1}{r} \left(\sum_{i=1}^r |E_i| \right)^2  = \Omega\left(\frac{n^4}{r} \right).
\end{equation*} Consequently, it suffices for us to prove that
\begin{equation}\label{energy_distance}
    |\mathcal{E}| \leq O\left(n^{\frac{20}{7} - \frac{18}{7(7s-4)} + \epsilon} \right).
\end{equation}

To this end, for every pair $(a,b) \in P^2$, we define a corresponding point in $\mathbb{R}^4$ by $v_{a,b} = (a_x, a_y, b_x, b_y)$. Here $a_x$ (respectively $a_y$) is the $x$-coordinate (respectively $y$-coordinate) of the plane point $a$. We also define a quadric in $\mathbb{R}^4$ as $Q_{a,b} : (x -a_x)^2 + (y - a_y)^2 = (z - b_x)^2 + (w - b_y)^2$. We observe that if $v_{a,b}$ lies on $Q_{c,d}$ then it follows $|ac| = |bd|$. We consider a random equitable partition of the point pairs $P^2 = P_1 \sqcup P_2$, that is, $\left||P_1|-|P_2|\right|\leq 1$ and we pick such a partition uniformly at random. Note that the probability for two pairs $(a,b),(c,d)\in P^2$ to be in the same part in this partition is at least $1/2$. Define $\mathcal{P} = \{ v_{a,b} : (a,b) \in P_1\}$ and $\mathcal{Q} = \{ Q_{a,b}: (a,b) \in P_2\}$. Then our observation, together with a basic probabilistic argument, tell us that
\begin{equation*}
    |\mathcal{E}| \leq  2 \cdot |I(\mathcal{P}, \mathcal{Q})| + O(n^2).
\end{equation*} It suffices for us to show the incidence graph $I(\mathcal{P}, \mathcal{Q})$ does not contain a copy of $K_{s, t}'$ with $t = (2s + p)^2 + 1$. Indeed, if this is given, we apply our Theorems~\ref{main} and~\ref{threshold2incidence} with $d = 4$ and $\sigma = 2 - 1/s$ to conclude~\eqref{energy_distance}. For the rest of our proof, the word ``point'' refers to a point in $\mathbb{R}^2$, and we say a point $x$ is in a vertex $v_{a,b}$ (or $Q_{a,b}$) if $p \in \{a,b\}$.

For the sake of contradiction, we suppose there is a copy of $K_{s,t}'$ in $I(\mathcal{P}, \mathcal{Q})$. We shall describe an iterative procedure of processing the edges of this $K_{s,t}'$ which will produce a set $\mathcal{A}$ of $p$ points determining less than $q$ distinct distance, contradicting our hypothesis on $P$. We denote the vertex set of $K_{s,t}'$ as $S \sqcup T\sqcup K$ where $|S| = s$, $|T|=t$, and, $|K|=st$ under the natural identification. We let $\mathcal{S}$ be the set of points in vertices of $S$. Note that we have $|\mathcal{S}| \leq 2s$. We also identify a subset $T' \subset T$ with the following property: $|T'| = t' > p$ and there is an enumeration $v_{a_1, b_1}, \dots, v_{a_{t'}, b_{t'}}$ of $T'$ such that\begin{equation*}
    \{ a_i, b_i \} \not \subset \{ a_1, \ldots, a_{i-1}, b_1, \ldots, b_{i-1}\} \cup \mathcal{S}.
\end{equation*} The existence of such $T'$ is guaranteed by $t$ being large enough.

Now we describe our edge-processing procedure. Let us write $S = \{u_1,\dots, u_s\}$, $T'=\{v_1,\dots, v_{t'}\}$, and $w_{i,j}$ the vertex in $K$ adjacent to both $v_i$ and $u_j$. Initially, we set $\mathcal{A} = \mathcal{S}$. During our procedure, we will add points into $\mathcal{A}$, and we will label some point pairs in $\mathcal{A}$ by other pairs. When a pair $ac$ is labeled by another pair $bd$, we will always have $|ac| = |bd|$, and our intuition is that $ac$ will not ``contribute'' to the number of distinct distance determined by $\mathcal{A}$. Our procedure has at most $t'$ rounds but may terminate early. During the $i$-th round, we consider the vertices $w_{i,1},\dots, w_{i,s}$ in order and process the two edges $u_jw_{i,j}$ and $v_iw_{i,j}$ in order. When an edge with end-vertices $v_{a,b}$ and $Q_{c,d}$ is processed, we add the points $a,b,c,d$ into $\mathcal{A}$ and try to produce a labeled pair in the following way: if both pairs $ac$ and $bd$ are unlabeled, we label $ac$ with $bd$; otherwise, if $ac$ is unlabeled and $bd$ is labeled with a pair $xy$ different from $ac$, we label $ac$ with $xy$; otherwise, if $bd$ is unlabeled and $ac$ is labeled with a pair $xy$ different from $bd$, we label $bd$ with $xy$. Let us remark that because $P_1$ and $P_2$ are disjoint, $ac$ and $bd$ will be pairs, i.e. $a\neq c$ and $b\neq d$. If at a certain step, adding the points into $\mathcal{A}$ would result in $|\mathcal{A}| > p$, we terminate the whole procedure before adding the points. We can notice that $|\mathcal{A}|$ increases by at most two from each edge, because there is always one end-vertex that appeared before. The properties of $T'$ guarantee that our procedure will end with $|\mathcal{A}| = p$ or $|\mathcal{A}| = p-1$. In the latter case, we add an arbitrary point from $P$ into $\mathcal{A}$ so that we end up with a set of $p$ points.

During the $i$-th round of our procedure, we use $p_i$ to denote the number of points newly added into $\mathcal{A}$ and $\ell_i$ to denote the number of labeled pairs we produced. By analyzing our procedure through some careful bookkeeping, we can verify the following claim.
\begin{claim}\label{claim_distance}
    If $\ell_i \leq 2s - 2$, then $p_i \leq \ell_i$.
\end{claim}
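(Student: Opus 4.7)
The plan is to bound $p_i$ in terms of $\ell_i$ by tracking, for each edge $e$ processed in round $i$, the balance between new plane points added (contributing to $p(e)$) and labels produced (contributing to $\ell(e) \in \{0,1\}$). The first observation I would establish is that if $p(e) \ge 1$, then $\ell(e) = 1$: a newly added plane point $x$ has never appeared in any earlier edge, so any pair containing $x$ is unlabeled, which means either case 1 or case 2 of the labeling rule applies. Taking the contrapositive, a skipped edge (one with $\ell(e)=0$) satisfies $p(e)=0$.

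I would then refine this by showing that each skipped edge $e$ forces a specific set $F_e$ of plane points to lie in $\mathcal{A}_0$, the state of $\mathcal{A}$ at the start of round $i$: $F_{e_{2j-1}} = \{c_j, d_j\}$ for $e_{2j-1} = u_j w_{i,j}$, $F_{e_2} = \{a_i,b_i,c_1,d_1\}$, and $F_{e_{2j}} = \{a_i,b_i,c_j,d_j\}$ for $j \ge 2$. The argument traces any existing label of a pair of $e$ back to its originating edge (the edge where the pair first appeared as one of the two processed pairs). Because $\alpha_j, \beta_j$ appear only in $u_j$'s edges (within round $i$, just $e_{2j-1}$ itself) and $a_i, b_i$ only in $v_i$'s edges, the originating edge must lie in a previous round, whence the relevant plane points are in $\mathcal{A}_0$.

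Setting $K = 2s - \ell_i$ for the number of skipped edges and $F = \bigcup_{e\text{ skipped}} F_e \subseteq \mathcal{A}_0$, the plane points of round $i$ outside $\mathcal{S}$ number at most $2s+2$, yielding $p_i \le (2s+2) - |F|$; thus it suffices to verify $|F| \ge K+2$ whenever $K \ge 3$. For each $j \in \{1,\dots,s\}$, let $\chi_j^- = \mathbf{1}[e_{2j-1} \text{ skipped}]$ and $\chi_j^+ = \mathbf{1}[e_{2j} \text{ skipped}]$, and set $T = |\{j : \chi_j^- + \chi_j^+ \ge 1\}|$, $D = |\{j : \chi_j^- = \chi_j^+ = 1\}|$, and $A = \mathbf{1}[\exists j: \chi_j^+ = 1]$. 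One has $K = T + D$ and $|F| \ge 2T + 2A$, so the target inequality $2T + 2A \ge T + D + 2$ reduces to $T \ge D$ when $A = 1$ (automatic) and to $K = T \ge 3$ when $A = 0$ (which forces $D = 0$). Hence $p_i \le (2s+2) - (K+2) = 2s - K = \ell_i$.

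The main subtlety is in the second step for edges $e_{2j}$ with $j \ge 2$: one must rule out the scenario where the label of, say, $a_i c_j$ was set within round $i$ itself via a coincidence $c_j = c_k$ at some earlier $e_{2k}$, in which case $c_j$ might not lie strictly in $\mathcal{A}_0$. Such a coincidence, however, reduces the number of distinct plane points in $V_i := \{a_i,b_i\} \cup \bigcup_j \{c_j, d_j\}$ by the same amount as $|F|$ loses relative to the clean case, so the combined bound $p_i \le |V_i| - |V_i \cap \mathcal{A}_0|$ continues to yield $p_i \le \ell_i$.
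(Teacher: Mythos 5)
Your ``clean'' case is handled correctly: the observation that a skipped edge adds no new plane points is exactly the paper's key property of bad edges, and your counting ($K=T+D$, $|F|\ge 2T+2A$, reduction to $T+2A\ge D+2$) is a tidy way to finish when all the plane points $a_i,b_i,c_1,d_1,\dots,c_s,d_s$ are distinct from each other and from $\mathcal{S}$. However, the justification of your second step contains a false premise, and the final paragraph, which is supposed to repair it, asserts rather than proves the hard part. You write that ``$\alpha_j,\beta_j$ appear only in $u_j$'s edges \dots and $a_i,b_i$ only in $v_i$'s edges,'' hence the originating edge of any pre-existing label must lie in a previous round. This is false at the level of plane points: distinct vertices of the embedded $K_{s,t}'$ are distinct as ordered pairs (or quadrics), but they may share plane points, e.g.\ $c_j$ may equal $d_k$ or $a_i$ may equal $\alpha_k$. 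When such coincidences occur, a pair of a skipped edge can acquire its label \emph{within} round $i$, and then neither the pair's points nor the points of its label need lie in $\mathcal{A}_0$; the set $F_e$ is simply not forced into $\mathcal{A}_0$.

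Your closing sentence claims each such coincidence reduces $|V_i|$ by at least as much as it reduces $|F|$, but this ledger is not established and is not obviously balanced. For instance, a single identification $c_j=d_k$ ($k<j$) costs $|V_i|$ exactly one point, yet it can destroy the forcing of \emph{both} $c_j$ and $d_j$ into $\mathcal{A}_0$ for a skipped edge $u_jw_{i,j}$: once the label of $\alpha_jc_j$ originates inside round $i$, the trace that placed $d_j$ (the label's second point) into a previous round collapses as well. Ruling out such configurations requires following the chain of equalities that makes an edge bad (the paper's equation for $|a_1a_2|=|c_1c_2|=\dots=|b_1b_2|$) and a case analysis on where each link of the chain was produced --- this is precisely the ``singular vertex / redundant point'' analysis that occupies most of the paper's proof, including a separate and admittedly harder treatment depending on the parity of $2s-\ell_i$. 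As written, your proposal replaces that analysis with a one-sentence assertion, so the claim is not proved.
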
 Given this claim, we argue the $p$ points in $\mathcal{A}$ determine less than $q$ distinct distances. We write \begin{equation*}
    x  = |\{i : p_i = 2s\}|,\quad y  = |\{i : p_i = 2s+1\}|,\quad z  = |\{i : p_i = 2s+2\}|.
\end{equation*} Note that $p_i \leq 2s+2$ always holds. Since $2s \cdot x + (2s + 1) \cdot y + (2s + 2) \cdot z \leq p$, we have\begin{equation*}
    x + y + z \leq \left\lfloor \frac{p}{2s} \right\rfloor.
\end{equation*} From our Claim~\ref{claim_distance}, we can argue
\begin{equation*}
    \sum_{i}c_i + x + 2y + 3z \geq \sum_i p_i \geq p - 1 - |\mathcal{S}| \geq p - 2s -1.
\end{equation*} Combining these two identities, we conclude
\begin{equation*}
    \sum_i c_i \geq p - 2s -1 - 3(x + y + z) \geq p - 2s - 1 - 3 \cdot \left\lfloor \frac{p}{2s} \right\rfloor.
\end{equation*} Observe that the distance of each labeled pair equals the distance of its label, which is another unlabeled point pair. Therefore, the number of distinct distances determined by the $p$ points in $\mathcal{A}$ is at most \begin{equation*}
    \binom{p}{2} - \sum_i c_i \leq q - 1,
\end{equation*} which contradicts our hypothesis on $P$.

\medskip

We devote the rest of our proof to Claim~\ref{claim_distance}. Suppose we have $\ell_i \leq 2s - 2$ for some index $i$, then there are $2s - \ell_i$ many edges that did not produce any labeled pairs. We call them \textit{bad edges} and their end-vertices in $K$ \textit{bad vertices}. Note the following important property of our procedure: no new points are added into $\mathcal{A}$ while processing a bad edge. Indeed, one possibility for an edge $v_{a,b}Q_{c,d}$ to be bad is that $bd$ is already labeled with $ac$, then these points must have appeared before and hence are already in $\mathcal{A}$. The other possibilities can be argued similarly. Therefore, the points added into $\mathcal{A}$ during the $i$-th round must be in either $v_i$ or a \textit{good} (i.e. not bad) $w_{i,j}$ ($1\leq j\leq s$). We also call $v_i$ a good vertex as a convention.

We call a vertex $w_{i,j}$ \textit{singular} if it is adjacent to exactly one bad edge. We call a point $a$ in a good vertex $v_{a,b}$ \textit{redundant} if $a = b$, or it is in another good vertex, or appeared in previous rounds, or in $\mathcal{S}$. The definition for redundant point is similar for $b$ in $Q_{a,b}$. We proceed with our proof under the assumption that $2s-\ell_i$ is odd: we argue that there are at most one singular vertex and no redundant points, otherwise our Claim~\ref{claim_distance} is true. Indeed, if there are two singular vertices, the total number of bad vertices is at least \begin{equation*}
    \left\lceil \frac{2s - \ell_i - 2}{2} \right\rceil + 2 = \frac{2s-\ell_i + 3}{2}.
\end{equation*} Hence we can compute $p_i < \ell_i$ using $p_i$ being at most twice the number of good vertices. If there is a redundant point, then $p_i$ is at most twice the number of good vertices minus one (since we do not need to count the redundant point), which is at most\begin{equation*}
    2 \cdot \left(s + 1 - \left\lceil \frac{2s - \ell_i}{2} \right\rceil \right) - 1 = \ell_i.
\end{equation*}

If no bad edge is adjacent to $v_i = v_{a_1,b_1}$, there are $2s-\ell_i \geq 2$ singular vertices and we are done. Thus, suppose $v_{a_1,b_1} Q_{a_2,b_2}$ is the first bad edge adjacent to $v_i$ we encounter. Since we do not produce a labeled pair while processing this edge, we must know from previous edges that \begin{equation} \label{eq:distance}
    |a_1a_2| = |c_1c_2| = |c_3c_4| = \dots = |d_1d_2| = |b_1b_2|.
\end{equation} We can assume that all equal signs here are produced by distinct good edges, otherwise this equation can be simplified. We also assume $a_1\neq b_1$ otherwise there is a redundant point in $v_i$.

Now we consider the edge $e_1$ that produced $|a_1a_2| = |c_1c_2|$. If $e_1$ is processed in previous rounds, then $a_1$ in $v_i$ is redundant and we are done. Hence we assume $e_1$ is processed in this round, and our proof diverges into the following cases:\begin{itemize}

    \item If $e_1$ is not adjacent to $v_{a_1,b_1}$ or $Q_{a_2,b_2}$, we consider the end-vertex $w \in K$ of $e_1$. Notice $w$ is already adjacent to a good edge $e_1$, and $v_i w$ cannot be bad otherwise $v_{a_1,b_1} Q_{a_2,b_2}$ is not the first bad edge adjacent to $v_i$ we encounter. Thus $w$ is a good vertex, and the point $a_1$ is either in $w$ or in $\mathcal{S}$, making $a_1$ in $v_i$ redundant.
    
    \item If $e_1$ is adjacent to $v_{a_1,b_1}$, upon renaming $c_1$ and $c_2$, there are two possibilities. (To see they are all the cases, one may consider where $a_1$ in $|a_1a_2| = |c_1c_2|$ comes from.)
    \begin{itemize}
        \item $e_1 = v_{a_1,b_1} Q_{a_2,c_2}$ with $b_1=c_1$. In this case, the pairs $b_1b_2$ and $c_1c_2$ must be different, otherwise we must have $b_2 = c_2$, making $v_{a_1,b_1}Q_{a_2,b_2}$ simultaneously good and bad. In particular, the term $|c_3c_4|$ exists in \eqref{eq:distance}. Denote the edge that produced $|c_1c_2|=|c_3c_4|$ as $e_2$ and its end-vertex in $K$ as $w'$. If $e_2$ is processed in previous rounds or $c_1$ is not in $w'$, then $b_1=c_1$ in $v_i$ will be redundant and we are done. Hence we assume $e_2$ is a good edge during the $i$-th round and the point $c_1$ is in $w'$. Since $w'$ and $Q_{a_2,c_2}$ cannot both be singular, one of them must be a good vertex. If $w'$ is a good vertex, $b_1 = c_1$ in $v_i$ will be redundant. If $Q_{a_2,c_2}$ is good, $c_2$ in $Q_{a_2,c_2}$ is redundant, because the point $c_2$ appears in the left end-vertex of $e_2$, hence is in $\mathcal{S}$.
        
        \item $e_1 = v_{a_1,b_1} Q_{c_2,a_1}$ with $a_1=c_1$ and $b_1=a_2$. In this case, the pairs $b_1b_2$ and $c_1c_2$ must be different, otherwise we must have $c_1=b_2=a_2$ and $c_2=b_1=a_2$, making the edge $v_{a_1,b_1}Q_{a_2,b_2}$ simultaneously good and bad. Similarly, $|c_3c_4|$ exists and we define $e_2$ and $w'$. If $e_2$ is processed in previous rounds or $c_1$ is not in $w'$, $a_1 = c_1$ in $v_i$ will be redundant. If $e_2$ is a good edge during the $i$-th round and the point $c_1$ is in $w'$, $a_1 = c_1$ in $v_i$ is still redundant because it appears in both $w'$ and $Q_{c_2,a_1}$, among which at least one is good (otherwise there will be two singular vertices).
    \end{itemize}

    \item If $e_1$ is adjacent to $Q_{a_2,b_2}$, the point $a_1$ must be in $Q_{a_2,b_2}$, otherwise it will be in the end-vertex of $e_1$ inside $S$, making $a_1$ in $v_i$ redundant. Also, $a_1 \neq a_2$ because $a_1a_2$ is a pair. Thus, upon renaming $c_1$ and $c_2$, we have $e_1 = v_{c_1,a_2}Q_{a_2,b_2}$ with $a_1 = b_2$ and $a_2=c_2$. Notice that the left end-vertex of $e_1$ is also $v_{c_1,c_2}$, hence $b_1 \in \{c_1,c_2\}$ would imply $b_1$ in $v_i$ being redundant. Thus, we assume $b_1 \not\in \{c_1,c_2\}$ and consequently the pairs $b_1b_2$ and $c_1c_2$ are different. In particular, the term $|d_1d_2|$ exists in \eqref{eq:distance}. In this case, we also know that $Q_{a_2,b_2}$ is the only singular vertex. Then the edge that produced $|d_1d_2| = |b_1b_2|$ will make $b_1$ in $v_i$ redundant.
\end{itemize}

    This finishes the proof under the assumption that $2s-\ell_i$ is odd. The proof with $2s-\ell_i$ being even follows from very similar arguments. In that case, we allow no singular vertex and wish to find two redundant points. We skip the proof for $2s-\ell_i$ being even and conclude Claim~\ref{claim_distance}, hence also the proof of Theorem~\ref{distance}.
\end{proof}

\section{Remarks}\label{sec_remark}

We mention three related open problems (eventually only two). The first problem asks whether we can say $\sigma_s = 2 -1/s$ is indeed the linear threshold of $K_{s,t}'$ for $s \geq 1$ and large $t$. Essentially, the problem concerns the dependence of $t$ on $\sigma$ in Theorem~\ref{main}.
\begin{problem}
    Is the following true: for every integer $s \geq 1$, any $\sigma < \sigma_s$ is less than the linear threshold of $K_{s,t}'$ for sufficiently large $t$ depending only on $s$?
\end{problem}

The second problem asks for an extension of Theorem~\ref{main} to hypergraph subdivisions. We use $K_s^r$ to denote the complete $r$-partite $r$-uniform hypergraph with each part of $K_s^r$ having $s$ vertices, and we use $(K_s^{r})'$ to denote its subdivision: $(K_s^{r})'$ has left hand side vertex set $\{(i,j): 1\leq i\leq r,~ 1\leq j\leq s\}$, right hand side vertex set $\{(j_1,j_2,\dots, j_r): 1\leq j_1,j_2,\dots,j_r \leq s\}$, and two vertices $(i,j)$ and $(j_1,j_2,\dots, j_r)$ are adjacent if and only if $j_i = j$.
\begin{problem}\label{hypergraph}
    Is the linear threshold of $(K_s^{r})'$ strictly less than $r$?
\end{problem}

\begin{remark}
    After an earlier version of this paper was posted on arXiv, Oliver Janzer commented that Problem~\ref{hypergraph} can be answered in the affirmative following the ideas in \cite{janzer2019improved}: We fix an arbitrary bipartite graph $G = (U \sqcup V, E)$ with $|V| \geq |U|^{r - \epsilon}$ and $|E| \geq \delta |V|$, and our goal is to show $G$ contains a copy of $(K_s^{r})'$ when $\epsilon$ and $\delta$ are properly chosen; Firstly, we can assume $G$ is close to being biregular using an argument similar to our Lemma~\ref{biregular}; Secondly, we call a $r$-set $\{u_1,\dots,u_r\} \subset U$ \textit{heavy} if $u_1,\dots,u_r$ have at least $s^r$ common neighbors, and we call it \textit{light} if there are at least one but less than $s^r$ common neighbors; We can assume there is no $K_s^r$ formed by heavy $r$-sets in $U$ (otherwise we can find a copy of $(K_s^{r})'$ in $G$ greedily), then using an argument similar to Lemma~10 in \cite{janzer2019improved}, we can argue that there are $\Omega(|U|^{r - \epsilon})$ light $r$-sets in $U$; Now, the Erd\H{o}s--Simonovits supersaturation theorem~\cite{erdHos1983supersaturated} guarantees the existence of $O(n^{rs - s^r\epsilon})$ many copies of $K_s^r$ formed by light $r$-sets in $U$; Finally, every $K_s^r$-copy in $U$ should correspond to a $(K_s^{r})'$-copy in $G$ except for the degenerate case where there are less vertices on the right hand side, but the number of such exceptions can be upper bounded by $o(n^{rs - s^r\epsilon})$, completing the proof. Here, the assumption of $G$ being close to regular is used in the last step, and $\epsilon,\delta$ are chosen to make inequalities hold throughout the proof.
\end{remark}

\begin{remark}
    Suppose $P$ is a set of $m$ points and $\mathcal{H}$ is a set of $n$ hyperplanes, both in $\mathbb{R}^d$, such that the incidence graph $I(P,\mathcal{H})$ does not contain a copy of $K_{s,t}$, Apfelbaum and Sharir~\cite{apfelbaum2007large} proved that\begin{equation}\label{hyperplane}
        |I(P,\mathcal{H})| \leq O\left( (mn)^{\frac{d}{d+1}} + m + n \right),
    \end{equation} where the constant hidden in the $O$-notation depends on $d,s,t$. Combining the affirmative answer to Problem~\ref{hypergraph} and our Theorem~\ref{threshold2incidence}, we can obtain a polynomial improvement of \eqref{hyperplane} with the additional hypothesis that $I(P,\mathcal{H})$ does not contain a copy of $(K_s^{d})'$. We remark that a $(K_s^{d})'$-copy in $I(P,\mathcal{H})$ naturally corresponds to a $d$-dimensional grid structure. However, it is unknown whether \eqref{hyperplane} is asymptotically tight beyond the case $d=2$ (see~\cite{sudakov2024evasive}), so \eqref{hyperplane} could possibly be improved without forbidding any substructures.
\end{remark}

The third problem is from incidence geometry. Our Theorem~\ref{threshold2incidence} fails to answer this problem due to the linear threshold of $C_6$ being 2 as shown by de Caen and Sz\'ekely~\cite{de1991maximum}. We call a point-line configuration $(P, L)$ a \textit{triangle} if $|L|=3=|P|$ and $P = \{\ell_1 \cap \ell_2 : \ell_1, \ell_2 \in L\}$.
\begin{problem}
    Is the following true: any $n$ points and $n$ lines in the real plane without a triangle sub-configuration determine $O(n^{4/3 - c})$ incidences for some constant $c > 0$?
\end{problem}

\noindent {\bf Acknowledgement.} This work was initiated during the Convex and Discrete Geometry Workshop at Erd\H{o}s Center in September 2023. Anqi Li would like to thank Adam Sheffer for introducing her to the problem of $d(n,p,q)$ during the 2020 NYC Discrete Math REU and Yufei Zhao for supporting her visit to the Erd\H{o}s Center. We wish to thank David Conlon, Oliver Janzer, Xizhi Liu, Dániel Simon, and Andrew Suk for helpful discussions.

\bibliographystyle{abbrv}
{\footnotesize\bibliography{main}}

\end{document}